\documentclass[11pt]{article}

\usepackage[T1]{fontenc}
\usepackage{amsmath}
\usepackage{amsfonts}
\usepackage{array}
\usepackage{enumerate}
\usepackage{graphicx}
\usepackage{amsthm}
\usepackage{xcolor}
\usepackage[utf8x]{inputenc}
\usepackage{graphicx}
\usepackage{caption}
\usepackage{subcaption}
\usepackage{amsmath}
\usepackage{amssymb}
\usepackage{changebar}
\usepackage{color}
\usepackage{latexsym,amsfonts,amscd,amsthm}
\usepackage{fancyhdr}
\usepackage{chemarr}
\usepackage{tikz}
\usepackage{cite}
\usepackage{changes}

\numberwithin{equation}{section}

\newtheorem{theorem}{Theorem}[section]
\newtheorem{lemma}[theorem]{Lemma}
\newtheorem{proposition}[theorem]{Proposition}
\newtheorem{corollary}[theorem]{Corollary}

\theoremstyle{definition}

\newtheorem{remark}{Remark}

\DeclareMathOperator{\rank}{rank}
\DeclareMathOperator{\im}{im}

\newcommand{\abs}[1]{\ensuremath{\left\vert#1\right\vert}}
\newcommand{\norm}[2][\relax]{\ifx#1\relax \ensuremath{\left\Vert#2\right\Vert} \else \ensuremath{\left\Vert#2\right\Vert_{#1}}\fi}

\begin{document}

\title{A coordinate-independent version of Hoppensteadt's convergence theorem}

\author{ Christian Lax,  Katrin Seliger, Sebastian Walcher \\ Lehrstuhl A f\"ur Mathematik\\ RWTH Aachen\\ 52056 Aachen, Germany}

\maketitle 

\begin{abstract} The classical theorems about singular perturbation reduction (due to Tikhonov and Fenichel) are concerned with convergence on a compact time interval (in slow time) as a small parameter approaches zero. For unbounded time intervals Hoppensteadt gave a convergence theorem, but his criteria are generally not easy to apply to concrete given systems. We state and prove a convergence result for autonomous systems on unbounded time intervals which relies on criteria that are relatively easy to verify, in particular for the case of a one-dimensional slow manifold. As for applications, we discuss several reaction equations from biochemistry.
 
\end{abstract}

\begin{align*}
 &\text{\textbf{MSC2010:} 34E15, 92C45, 34C45}\\
&\text{\textbf{Keywords:} singular perturbations, reduction, reaction system, Lyapunov}
\end{align*}

\section{Introduction}

Singular perturbation phenomena occur frequently in the modelling and analysis of chemical or biological systems, in particular for reaction equations, and are highly relevant for reducing the dimension of a problem. For reaction equations (involving a small parameter $\varepsilon$), such phenomena may often be interpreted in the context of quasi-steady state (QSS) or partial equilibrium approximations (PEA). In many instances, the classical work of Tikhonov \cite{tikh} and Fenichel \cite{fenichel} provides a method to obtain a reduced equation. \\
The theorems of Tikhonov and Fenichel guarantee convergence on some fixed compact time interval as $\varepsilon\to 0$.  But beyond this result, in many applications one expects convergence for all positive times after a short initial phase, i.e. with slow time ranging in $[\tau_0,\infty)$ for some $\tau_0>0$. (In general convergence does not hold on an unbounded interval; see Fenichel \cite{fenichel}, p.~68 for a well-known example involving the van der Pol equation).  \\ 
Hoppensteadt \cite{Hoppensteadt} stated and proved a convergence theorem for singularly perturbed systems which guarantee convergence on unbounded intervals, essentially resolving the matter up to coordinate transformations. However, in many potential applications these transformations (which effect a separation of variables into ``slow'' and ``fast'') cannot be determined explicitly, and the hypotheses of the theorem are difficult to verify. In fact, even if a system is given in slow-fast coordinates, Hoppensteadt's crucial conditions may not be readily verifiable. In the literature one finds some applications of this theorem where an explicit coordinate transformation is determined and the validity of Hoppensteadt's conditions is verified directly. Thus, Cavallo and Natale \cite{cavnat}, Teel et al. \cite{teeletal}, and Back and Shim \cite{bashi} discuss applications to control theory. For the classical Michaelis-Menten enzyme model (with low enzyme concentration), no doubt was ever expressed about the validity of the reduction for all positive times, and the direct estimates given in Segel and Slemrod \cite{SSl} do imply convergence for the case of irreversible product formation. For the reversible case, it seems that a convergence proof was given only relatively recently, in \cite{nw11}. This proof uses Hoppensteadt's criteria, and the crucial part invokes explicit knowledge of a first integral for the fast system. The argument cannot be extended to familiar variants of Michaelis-Menten, e.g. those including inhibition or cooperativity. One purpose of the present work is to provide more easily applicable criteria for reaction systems.\\

The paper is organized as follows. We start the main section (Section 2) with an auxiliary result on Lyapunov functions and asymptotic stability. Then we proceed to a version of Hoppensteadt's theorem for autonomous systems that are written in slow-fast coordinates with special properties. Following this, we do not only specialize (and thus simplify) Hoppensteadt's conditions for autonomous systems but we also replace one of the conditions with another that is somewhat stronger, but readily verifiable.  We next recall Tikhonov-Fenichel reduction for singularly perturbed systems with no a priori separation of slow and fast variables \cite{gw2}. Finally we give additional conditions which guarantee convergence to solutions of the reduced system on unbounded intervals, with Theorem \ref{hshsatz} the main result. Some of the conditions we impose (e.g. eigenvalue conditions) are relatively easy to verify in applications, but for others verification may still be problematic. In particular this concerns the existence of a global parameterization of the asymptotic slow manifold, and the existence of a Lyapunov function for the reduced system on this manifold. But at least for the case of a one-dimensional slow manifold, which is highly relevant for QSS in biochemistry, these problems can be resolved completely, and one obtains readily applicable criteria.  In Section 3 we discuss a number of examples. \\
Some of the results presented are based on work in the theses \cite{selma} and \cite{laxdiss}.
The Appendix (Section 5) contains a list of Hoppensteadt's conditions and the corresponding theorem for easy reference.

\section{Hoppensteadt's theorem for autonomous systems}
The goal of this section is to state and prove a version of Hoppensteadt's convergence theorem \cite{Hoppensteadt} for autonomous systems which is readily applicable to the investigation of a reasonably large class of differential equations. In the main result we will not require any a priori separation of fast and slow variables, and we will focus on conditions that are amenable to explicit verification. \\
We first prove an auxiliary result on Lyapunov functions, and then an autonomous version of the convergence theorem for special slow-fast coordinates, before turning to a general coordinate-free version. The result is, in particular, easily applicable to systems with a one-dimensional slow manifold.\\
Let $U\subset \mathbb R^m$ be open, and $p\in C^1(U;\mathbb R^m)$. We consider the differential equation
\begin{equation}\label{odep}
x^\prime=p(x)
\end{equation}
on $ U$, with the prime (here and in the following) denoting differentiation with respect to the independent variable $\tau$.
\subsection{An auxiliary result}

\begin{lemma}\label{lyapunov}
Let $Y\subset \mathbb R^m$ be a submanifold and $\widehat K\subset U$ compact such that $Y\cap\widehat  K$ is positively invariant with respect to \eqref{odep}. Assume there exists a neighborhood $S\subset U$ of $Y\cap \widehat K$ and a function $\varphi\in C^1(S)$ that satisfies the following conditions:
  \begin{enumerate}[(i)]
   \item The inequality $\varphi(x)\geq0$ holds for all $x\in Y\cap \widehat K$, and there exists exactly one $z\in Y\cap\widehat  K$ such that $\varphi(z)=0$.
   \item Given a norm $\norm{\cdot }$ on $\mathbb R^m$ there exist $c_1,c_2>0$, a positive integer $a$, and $\rho>0$ such that for all $x\in Y\cap B_{\rho}(z)$ the inequalities
    \[
     c_1\norm{x-z}^a\leq \varphi(x)\leq c_2 \norm{x-z}^a
   \]
 are satisfied. (Here $z$ is from (i), and $B_{\rho}(z)$ denotes the open ball with center $z$ and radius $\rho$.)
   \item There exist  $\nu>0$ and $k\geq 1$ such that the Lie derivative of $\varphi$ with respect to $p$ satisfies
    \[
     L_{p}(\varphi)(x)\leq -\nu \varphi(x)^k
    \]
   for all $x\in Y\cap \widehat K$. (Recall  $L_p(\varphi)(x)=D\varphi(x)\cdot p(x)$ for all $x$.)
  \end{enumerate}
 Then there exists $c>0$ such that for all $x_0\in Y\cap \widehat K$, $x_0\not=z$ the solution $\Phi(\tau,x_0)$ of the initial value problem $x^\prime=\frac{dx}{d\tau}=p(x)$, $x(0)=x_0$ satisfies the inequality
  \[
   \norm{\Phi(\tau,x_0)-z}\leq c\norm{x_0-z}\gamma(\tau),
  \]
with 
\[
\gamma(\tau)=\left\{ \begin{array}{lcl} e^{-\nu \tau/a} &\text{ for }& k=1,\\
                                                       ((k-1)\nu \tau \varphi(x_0)^{k-1}+1)^{1/[a(1-k)]}&\text{ for }& k>1
\end{array}\right. 
\]
strictly decreasing to $0$ as $\tau\to\infty$. \\

\end{lemma}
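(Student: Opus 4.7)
The plan is to use $\varphi$ as a Lyapunov function along the flow. By (iii) and the chain rule, $f(\tau) := \varphi(\Phi(\tau,x_0))$ satisfies the scalar differential inequality $f'(\tau)\le-\nu f(\tau)^k$ for all $\tau\ge 0$; the orbit is defined on $[0,\infty)$ because it lies in the compact set $\widehat K$ by positive invariance of $Y\cap\widehat K$. Scalar comparison with $g'=-\nu g^k$, $g(0)=\varphi(x_0)$, whose solution is $\varphi(x_0)e^{-\nu\tau}$ for $k=1$ and $\varphi(x_0)[1+(k-1)\nu\tau\varphi(x_0)^{k-1}]^{1/(1-k)}$ for $k>1$, yields
\[
\varphi(\Phi(\tau,x_0))\le\varphi(x_0)\,\gamma(\tau)^a,\qquad \tau\ge 0,
\]
with $\gamma$ precisely as in the statement; monotonicity and $\gamma(\tau)\to 0$ follow by inspection.

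Next, the lower bound in (ii) only applies inside $Y\cap B_\rho(z)$, so one must show that every orbit enters (and remains in) this ball eventually, with entry time uniform in $x_0$. Since $f$ is monotone and bounded below by $0$, $f\to f_\infty\ge 0$; if $f_\infty>0$ the inequality $f'\le-\nu f_\infty^k$ would force $f\to-\infty$, contradiction. Hence $f\to 0$, and a standard compactness-and-continuity argument combined with the uniqueness in (i) gives $\Phi(\tau,x_0)\to z$. For the uniform entry time I pick $\rho_1\in(0,\rho)$ with $c_2\rho_1^a<c_1\rho^a$. If the orbit of an $x_0$ with $\varphi(x_0)<c_1\rho^a$ first hit $\partial B_\rho(z)\cap Y$ at some $\tau^*$, continuity and (ii) would give $\varphi(\Phi(\tau^*,x_0))\ge c_1\rho^a$, contradicting monotonicity of $f$. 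Hence any orbit that has entered $B_{\rho_1}(z)\cap Y$ stays in $B_\rho(z)\cap Y$ forever. Pointwise convergence $\Phi(\tau,x_0)\to z$, continuous dependence on initial data, and a finite-cover argument on $Y\cap\widehat K$ then produce a uniform $T^*$ such that $\Phi(\tau,x_0)\in B_\rho(z)\cap Y$ for all $\tau\ge T^*$ and every $x_0\in Y\cap\widehat K$.

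For $\tau\ge T^*$, combining (ii) with the Lyapunov estimate gives $\|\Phi(\tau,x_0)-z\|\le(\varphi(x_0)/c_1)^{1/a}\gamma(\tau)$, and the prefactor is absorbed into $c\|x_0-z\|$ either via the upper bound in (ii) when $x_0\in B_\rho(z)$, or by exploiting $\|x_0-z\|\ge\rho$ together with the boundedness of $\varphi$ on $Y\cap\widehat K$ otherwise. For $\tau\in[0,T^*]$ one splits again: if $x_0\in B_{\rho_1}(z)\cap Y$, the forward invariance just established places the orbit in $B_\rho(z)\cap Y$ from the start, so the same estimate works throughout; otherwise $\|x_0-z\|\ge\rho_1$, and a crude diameter bound on $\|\Phi(\tau,x_0)-z\|$ combined with a uniform positive lower bound on $\gamma(T^*)$ closes it. Taking a maximum over the finitely many regimes produces the single constant $c$ the lemma demands. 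The main technical nuisance is precisely this last bookkeeping in the case $k>1$: because $\gamma$ itself depends on $\varphi(x_0)$, establishing a uniform positive lower bound on $\gamma(T^*)$ requires the uniform upper bound on $\varphi$ coming from compactness of $Y\cap\widehat K$, and similarly care is needed when absorbing prefactors so as not to introduce $x_0$-dependence into the final constant.
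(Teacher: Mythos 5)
Your argument is correct, but it takes a noticeably longer route than the paper's at the one point where the two diverge. The first step is identical: apply the comparison lemma for scalar differential inequalities to $f(\tau)=\varphi(\Phi(\tau,x_0))$ and observe that the comparison solution equals $\varphi(x_0)\gamma(\tau)^a$. The difference lies in how you cope with the fact that the two-sided estimate (ii) is only assumed on $Y\cap B_\rho(z)$. You keep it local and compensate dynamically: a trapping argument for $B_\rho(z)\cap Y$, a uniform entry time $T^*$ obtained from pointwise convergence plus continuous dependence plus a finite cover, and then a case split between $[0,T^*]$ and $[T^*,\infty)$ with separate absorption of prefactors (correctly noting that for $k>1$ the $x_0$-dependence of $\gamma$ must be controlled by $\max_{Y\cap\widehat K}\varphi$). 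The paper instead disposes of the issue statically in one stroke: the function $x\mapsto \varphi(x)/\norm{x-z}^a$ is continuous and strictly positive on the compact set $Y\cap\widehat K\cap\{\norm{x-z}\geq\rho\}$ (by (i), since $z$ is the only zero of $\varphi$), hence bounded above and below by positive constants there; combined with (ii) this upgrades the two-sided bound $c_1^*\norm{x-z}^a\leq\varphi(x)\leq c_2^*\norm{x-z}^a$ to all of $Y\cap\widehat K$, after which the conclusion is a two-line computation with no entry times, no invariance of balls, and no case distinctions. Your approach buys nothing extra here (the conclusion and constants are of the same quality), so if you revise, the compactness upgrade of (ii) is the observation to adopt; your finite-cover step is also dispensable even within your own scheme, since the comparison bound $\varphi(\Phi(\tau,x_0))\leq((k-1)\nu\tau+M^{1-k})^{1/(1-k)}$ with $M=\max_{Y\cap\widehat K}\varphi$ already yields a uniform entry time directly.
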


\begin{proof}
 The function
  \[
   \frac{\varphi(x)}{\norm{x-z}^a}
  \]
 is continuous on $\widehat K\cap \{x\in\mathbb R^m;\, \norm{x-z}\geq \rho\}$  and therefore bounded below and above by positive constants. Hence there exist $0<c_1^*<c_2^*$ such that for all  $x\in Y\cap \widehat K$ the inequalities
  \begin{equation}\label{cineq}
   c_1^*\norm{x-z}^a\leq \varphi(x)\leq c_2^*\norm{x-z}^a
   \end{equation}
 hold. \\
Now use condition (iii) and recall a result on differential inequalities (e.g. Amann \cite{Amann}, Lemma 16.4): Since the initial value problem
\[
w^\prime = -\nu w^k,\quad w(0)= \varphi(x_0)>0
\]
in $\mathbb R$ is solved by
\[
\widetilde \gamma(\tau)=\left\{ \begin{array}{lcl} \varphi(x_0)e^{-\nu \tau} &\text{ for }& k=1,\\
                                                       ((k-1)\nu \tau +\varphi(x_0)^{1-k})^{1/(1-k)}&\text{ for }& k>1,
\end{array}\right.
\]
the solution $\Phi(\tau,x_0)$ satisfies
  \[
   \varphi(\Phi(\tau,x_0))\leq \widetilde \gamma(\tau)\leq c_2^*\norm{x_0-z} ^a  \gamma(\tau)
  \]
due to (ii). By virtue of \eqref{cineq} we obtain
  \[
   \norm{\Phi(\tau,x_0)-z}\leq \left(\tfrac1{c_1^*}\varphi(\Phi(\tau,x_0))\right)^{1/a}\leq \left(\tfrac{c_2^*}{c_1^*}\right)^{1/a}\norm{x_0-z}\gamma(\tau).
  \]
 The assertion follows.
\end{proof}

\subsection{Systems in Tikhonov standard form}\label{subs22}
Here we will prove an intermediate result for systems written in special coordinates. With the exception of (ASII) below, the conditions are patterned after Hoppensteadt \cite{Hoppensteadt}, conditions (I) through (VII).\\
In the following denote by $\abs{\cdot}_1$ the 1-norm,  let $s$ and $r$ be positive integers and $m=s+r$. For $R>0$ we define
  \begin{equation}\label{tubeone}
\begin{array}{rcl}
   S_R&:=&\{y=(y_1,y_2)\in\mathbb R^{s+r},\ \abs{y}_1=\abs{y_1}_1+\abs{y_2}_1\leq R\},\\
S_{1,R}&:=&\{y_1\in\mathbb R^{s},\, \abs{y_1}_1\leq R\},\\
S_{2,R}&:=&\{y_2\in\mathbb R^{r},\,\abs{y_2}_1\leq R\}.\\
\end{array}
  \end{equation}
We consider a singularly perturbed autonomous system in Tikhonov standard form
  \begin{align}
   &y_1'= f(y_1,y_2,\varepsilon)\label{tnfhs1}\\
   &y_2'=\varepsilon^{-1}g(y_1,y_2,\varepsilon)\label{tnfhs2}
  \end{align} 
as well as its counterpart in fast time $t=\tau/\varepsilon$, viz.
 \begin{align}
   &\dot y_1= \varepsilon f(y_1,y_2,\varepsilon)\\
   &\dot y_2=g(y_1,y_2,\varepsilon)
  \end{align} 
 subject to the following conditions:
  \begin{enumerate}
\item[(AS0)] Both $f$ and $g$ are $C^2$ functions in an open subset $\widetilde U$ of $\mathbb R^{s}\times \mathbb R^r\times \mathbb R$, and $\widetilde U$ contains $S_R\times [0,\varepsilon_0)$ for some $R>0$ and $\varepsilon_0>0$.
   \item[(ASI)] The system 
    \begin{align}
     &y_1'=f(y_1,y_2,0) \label{redSystem1}\\
     &0=g(y_1,y_2,0)\label{redSystem2}
    \end{align}
 admits the stationary point $0\in\mathbb R^s\times \mathbb R^r$.
\item[(ASII)] For all $y_1\in S_{1,R}$ one has $g(y_1,\,0,\,0)=0$, and there is a constant $\nu>0$ such that all eigenvalues of $D_2g(y_1,0,0)$ have real part $\leq -\nu$.\\ (Here and in the following $D_i$ denotes the partial derivative with respect to $y_i$.)

   \item[(ASIII)] The hypotheses of Lemma \ref{lyapunov} hold with $p(y)=f(y_1,0,0)$, $Y=\{(y_1,0)\in S_R\}$ and $z=0$.
  \end{enumerate}
Invoking a compactness argument, it would suffice in (ASII) to require that the eigenvalues of $D_2g(y_1,0,0)$ have real part $<0$ for all $y_1\in S_{1,R}$.

\begin{proposition}\label{hopstandard} Whenever assumptions {\em (AS0)} through {\em (ASIII)}  hold, there exists a compact neighborhood $K\times [0,\varepsilon_0^*]\subseteq S_R\times [0,\varepsilon_0)$ of $0\in\mathbb R^{s+r+1}$ with the following properties: Given  $y_0:=(y_{1,0},\,y_{2,0})\in K$ and $\varepsilon\in  (0,\varepsilon_0^*]$, the solution $\Phi(\tau,y_0)$ of \eqref{tnfhs1}--\eqref{tnfhs2}
exists for $0\leq \tau<\infty$. As $\varepsilon\to 0$, this solution converges uniformly on all closed subsets of $(0,\infty)$ to the solution 
of \eqref{redSystem1}--\eqref{redSystem2} with respect to the initial value $y_1(0)=y_{1,0}$.
\end{proposition}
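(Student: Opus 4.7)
The plan is to reduce Proposition \ref{hopstandard} to two ingredients: Tikhonov's classical theorem, which furnishes convergence on compact time intervals in slow time, and an autonomous Lyapunov trapping argument near the equilibrium $0$, which secures the infinite tail.

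First I would construct an auxiliary Lyapunov function $W(y_1,y_2)$ for the boundary layer. Condition (ASII) gives a uniform spectral gap for $D_2 g(y_1,0,0)$ over $y_1\in S_{1,R}$, so solving the parameter-dependent Lyapunov matrix equation produces a $C^1$ family of symmetric positive definite matrices $P(y_1)$, and $W(y_1,y_2):=y_2^\top P(y_1)y_2$ satisfies $c_1|y_2|^2\leq W\leq c_2|y_2|^2$, $|D_2 W|=O(|y_2|)$, $|D_1 W|=O(|y_2|^2)$, and $D_2 W\cdot g(y_1,y_2,0)\leq -\tilde\nu W$ on a tube $|y_2|\leq r$. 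I would then form the composite Lyapunov function $V(y_1,y_2):=\varphi(y_1,0)+\lambda W(y_1,y_2)$ on a neighborhood $N$ of the origin. Differentiating $V$ along the full system in slow time produces two dominant negative contributions, $-\nu\varphi(y_1,0)^k$ from (ASIII) and $-\lambda\tilde\nu W/\varepsilon$ from the boundary layer. The cross terms -- the Taylor remainder in $D_1\varphi(y_1,0)\cdot[f(y_1,y_2,\varepsilon)-f(y_1,0,0)]$, the $\varepsilon$-correction $\lambda D_2 W\cdot\varepsilon^{-1}[g(y_1,y_2,\varepsilon)-g(y_1,y_2,0)]$, and $\lambda D_1 W\cdot f$ -- are subordinate because $g(y_1,0,0)=0$ forces $g=D_2 g(y_1,0,0)y_2+O(|y_2|^2+\varepsilon)$, while $W$ and $D_1 W$ are $O(|y_2|^2)$. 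Choosing $\lambda$ small and then $\varepsilon_0^*$ small absorbs the cross terms into the two dissipative terms, giving $LV\leq -\tfrac{\nu}{2}\varphi(y_1,0)^k-\tfrac{\lambda\tilde\nu}{2\varepsilon}W$ on $N$ for all $\varepsilon\in(0,\varepsilon_0^*]$. A sufficiently small sublevel set $K:=\{V\leq\eta\}\subseteq S_R$ then lies in $N$, is positively invariant under the full flow, and every trajectory starting in it tends to $0$ as $\tau\to\infty$; this simultaneously furnishes the compact neighborhood $K$ of the statement and global existence on $[0,\infty)$.

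Next I would invoke Tikhonov's finite-interval theorem (whose hypotheses are implied by (AS0), (ASI), and (ASII)): for every $0<\tau_1<T$, $\Phi(\tau,y_0,\varepsilon)\to(\bar y_1(\tau),0)$ uniformly in $(\tau,y_0)\in[\tau_1,T]\times K$ as $\varepsilon\to 0$, where $\bar y_1$ solves \eqref{redSystem1}--\eqref{redSystem2} with $\bar y_1(0)=y_{1,0}$. Meanwhile Lemma \ref{lyapunov} applied verbatim under (ASIII) gives $\|\bar y_1(\tau)\|\leq c\|y_{1,0}\|\gamma(\tau)\to 0$. To pass to the unbounded interval, pick $T_0$ so large that $(\bar y_1(\tau),0)\in\{V\leq\eta/2\}$ for all $\tau\geq T_0$ uniformly in $y_0\in K$, then shrink $\varepsilon_0^*$ further so that the finite-interval estimate on $[\tau_1,T_0]$ puts $\Phi(T_0,y_0,\varepsilon)$ inside $K$ for every $y_0\in K$ and $\varepsilon\in(0,\varepsilon_0^*]$. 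By the invariance and attraction from the first paragraph, $\Phi(\tau,y_0,\varepsilon)\to 0$ as $\tau\to\infty$; since $\bar y_1(\tau)\to 0$ as well, the two-sided estimate $\|\Phi-(\bar y_1,0)\|\to 0$ is uniform on $[T_0,\infty)$. Combined with Tikhonov on $[\tau_1,T_0]$ this yields uniform convergence on $[\tau_1,\infty)$, and every closed subset of $(0,\infty)$ is contained in such a half-line.

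The hardest part is the Lyapunov bookkeeping in the first paragraph. When $k>1$ the slow decay $-\nu\varphi^k$ is merely polynomial, so the cross terms must be dominated on a neighborhood independent of $\varepsilon$; the tight use of $g(y_1,0,0)=0$ together with the quadratic vanishing of $W$ and $D_1 W$ at $y_2=0$ is what keeps them subordinate, and without the sharper hypothesis (ASII) one would not gain the factor $1/\varepsilon$ in front of $W$ needed to close the estimate uniformly in $\varepsilon$.
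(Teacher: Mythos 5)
Your route is genuinely different from the paper's: the paper does not re-derive the convergence statement at all, but verifies Hoppensteadt's hypotheses (I)--(VII) one by one (the only substantive work being that (ASII) implies the uniform boundary-layer stability (VII), via Hadamard's lemma and a quadratic form) and then invokes Theorem \ref{HoppensteadtSatz} as a black box. You instead attempt to reconstruct the infinite-interval result from Tikhonov's finite-interval theorem plus a composite Lyapunov function. That is a legitimate strategy in principle, but the estimate on which your whole tail argument rests does not hold as stated.

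The gap is the claimed inequality $LV\le -\tfrac{\nu}{2}\varphi(y_1,0)^k-\tfrac{\lambda\tilde\nu}{2\varepsilon}W$ on a fixed neighborhood $N$, uniformly for $\varepsilon\in(0,\varepsilon_0^*]$. The hypotheses only force $f(0,0,0)=0$ and $g(0,0,0)=0$; for $\varepsilon>0$ the full system \eqref{tnfhs1}--\eqref{tnfhs2} need not have an equilibrium at the origin (in the reversible Michaelis--Menten example of Section 3 the true equilibrium of the full system lies at distance $O(\varepsilon)$ from the reduced one). At that true equilibrium $z_\varepsilon$ one has $LV(z_\varepsilon)=0$ while your right-hand side is strictly negative unless $z_\varepsilon=0$, so the inequality must fail in every neighborhood of $0$. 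Concretely, on $\{y_2=0\}$ the derivative of $V$ contains $D_1\varphi(y_1,0)\cdot\bigl(f(y_1,0,\varepsilon)-f(y_1,0,0)\bigr)=O(\varepsilon)\,\|y_1\|^{a-1}$, which cannot be absorbed by $-\tfrac{\nu}{2}\varphi^k=O(\|y_1\|^{ak})$ once $\|y_1\|$ is small compared with a power of $\varepsilon$; likewise the cross term of size $O(|y_2|)$ is dominated by $\tfrac{\lambda\tilde\nu}{2\varepsilon}c_1|y_2|^2$ only for $|y_2|\gtrsim\varepsilon$. Consequently your assertion that every trajectory in $K$ tends to $0$ as $\tau\to\infty$ for fixed $\varepsilon>0$ is false in general --- trajectories only enter and remain in an $O(\varepsilon)$-neighborhood --- and the closing step ``$\Phi\to0$ and $\bar y_1\to 0$, hence the difference tends to $0$ uniformly on $[T_0,\infty)$'' conflates the limit $\tau\to\infty$ with the limit $\varepsilon\to0$. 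The argument can be repaired: show that for each fixed level $\eta'>0$ the sublevel set $\{V\le\eta'\}$ is positively invariant once $\varepsilon\le\varepsilon(\eta')$ (on its boundary at least one of the two dissipative terms is bounded away from zero), use Lemma \ref{lyapunov} together with the finite-interval theorem to drive the solution into $\{V\le\eta'\}$ by some time $T(\eta')$ uniformly in $y_0\in K$, and then run a ``for every $\delta$ there exist $T_\delta$ and $\varepsilon_\delta$'' argument. But that repaired argument is essentially Hoppensteadt's own proof, which the paper deliberately avoids redoing.
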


\begin{proof} One has to verify  conditions (I) through (VII) in Hoppensteadt \cite{Hoppensteadt}; for the reader's convenience these are recalled in Section \ref{appsec} below. Clearly (AS0) and (ASI), together with the fact that the system is autonomous, ensure that conditions (I) through (V) hold. (In the autonomous case the uniformity requirements follow readily by continuity and compactness.) Condition (VI) is a consequence of (ASIII) and Lemma \ref{lyapunov}. \\
There remains to show that (ASII) implies the validity of (VII). In principle one could refer to Fenichel \cite{fenichel}, but we give a proof with some details here.
\begin{enumerate}[(i)]
\item  From $g(y_1,0,0)=0$ for all $y_1\in S_{1,R}$ and by Hadamard's Lemma (see e.g. Nestruev \cite{nestruev2003smooth}, Lemma 2.8) there exists a $C^1$ function $\widehat R$ such that
\[
g(y_1,y_2,0)=\widehat R(y_1,y_2)y_2
\]
for all $(y_1,y_2)\in S_R$. Furthermore
\[
\widehat R(y_1,y_2)y_2=\left(A(y_1)+R(y_1,y_2)\right)y_2
\]
where $y_1\mapsto A(y_1)=\widehat R(y_1,0) \in \mathbb R^{r\times r}$ is $C^1$ and 
$\|R(y_1,y_2)\|\to 0$ as $\|y_2\|\to 0$, uniformly in $y_1$. According to (ASII),  for $y_1\in S_{1,R}$ all eigenvalues of $A(y_1)$ have real part $\leq -\nu<0$.
\item We denote by $\phi(\cdot,\cdot)$ the standard Euclidean scalar product on $\mathbb R^r$, and thus have 
\[
\phi(y_2,y_2)=\|y_2\|_2^2.
\]
Denote by $C$ the unit sphere in $\mathbb R^r$ with respect to $\|\cdot\|_2$. For every $y_1$ there exists a $\theta(y_1)>0$ such that
\[
\begin{array}{rccl}
2\phi(y_2,A(y_1)y_2)&\leq & -2\theta(y_1)\|y_2\|_2^2& \text{  for all  }y_2\in\mathbb R^r,\\
2\phi(y_2,A(y_1)y_2)&\leq & -2\theta(y_1)& \text{  for all  }y_2\in C.
\end{array}
\]
The proof of the first inequality follows by the arguments in Walter \cite{Walter} (\S 30, IV(d) and proof of \S 29, VIII). These imply that there exists some positive definite symmetric bilinear form $\psi$ such that 
\[
2\psi(y_2,A(y_1)y_2)\leq -\nu/2\cdot\psi(y_2,y_2) \text{  for all  }y_2,
\]
and the assertion follows by the equivalence of all norms on $\mathbb R^r$. The second inequality is a simple consequence but it shows that one can choose $-2\theta(y_1)$ as the maximum of the left hand side function on $C$.
\item Given $y_1^*\in S_{1,R}$ there exists a neighborhood $U(y_1^*)$ such that 
\[
\phi(y_2,A(y_1)y_2)\leq -\theta(y_1^*)\|y_2\|_2^2 \text{  for all  }y_2\in\mathbb R^r, \,y_1\in U(y_1^*).
\]
This follows by the estimates in (ii), the continuity of the map 
\[
S_{1,R}\times C\to \mathbb R,\quad (y_1,y_2)\mapsto \phi(y_2, A(y_1)y_2),
\]
and the homogeneity of $\phi$. In conjunction with the compactness of $S_{1,R}$ this estimate implies the existence of some $\beta>0$ such that
\[
\phi(y_2,A(y_1)y_2)\leq -2\beta \|y_2\|_2^2 \text{  for all  } (y_1,y_2)\in S_R.
\]
\item Moreover there exists $\rho>0$ such that 
\[
|2\phi(y_2,R(y_1,y_2)y_2)|\leq \beta\cdot\|y_2\|_2^2 \text{  for all  } y_1\in S_{1,R},\,\,y_2\in S_{2,\rho}
\]
due to uniform  convergence with respect to $y_1$ (again, see Walter \cite{Walter}, loc.cit.). Altogether one obtains
\[
L_g(\phi)(y_2)= 2\phi(y_2, A(y_1)y_2)+2\phi(y_2,R(y_1,y_2)y_2)\leq -\beta\|y_2\|_2^2
\]
for all $(y_1,y_2)\in S_{1,R}\times S_{2,\rho}$.
\item Denote the solution of $ x_2^\prime=g(y_1,x_2,0)$ with initial value $y_2$ by $\Gamma(y_1,y_2,\tau)$. Then by Amann \cite{Amann}, Lemma 16.4 one finds 
\[
 \phi(\Gamma(y_1,y_2,\tau))\leq \|y_2\|_2^2\cdot\exp(-\beta \tau),
\] 
hence
\[
\|\Gamma(y_1,y_2,\tau)\|\leq \|y_2\|\cdot\exp(-\beta \tau/2).
\]
 Hoppensteadt's condition (VII) follows via the equivalence of all norms on $\mathbb R^r$.
 \end{enumerate}
\end{proof}

\begin{remark} Hoppensteadt \cite{Hoppensteadt} develops his conditions (for non-autonomous systems) from a more general setting in a step-by-step manner, with certain normalizations (that can not necessarily be carried out explicitly) being invoked at various stages; see Section \ref{appsec}.   Eventually his crucial conditions require the special setting which we consider above (and furthermore restrict to autonomous systems). 
\end{remark}

\subsection{General systems}
The goal of this subsection is to extend Proposition \ref{hopstandard} to settings where no a priori separation of ``slow'' and ``fast'' variables is given. Thus we consider a system 
 \begin{equation}\label{evolution}
   \dot x=h(x,\varepsilon)=h^{(0)}(x)+\varepsilon h^{(1)}(x)+\varepsilon^2 h^*(x,\varepsilon)
  \end{equation}
with right-hand side $C^2$ in $(x,\varepsilon)$, and $(x,\varepsilon)$ is in some open subset of $\mathbb R^m\times\mathbb R$ that contains $(x_0,0)$ for some $x_0$. We will also work with the time-scaled version ($\tau=\varepsilon t$ as in subsection \ref{subs22}), thus
\begin{equation}\label{evolutionscaled}
 x^\prime=  \frac{d x}{d\tau}=\varepsilon^{-1}h(x,\varepsilon)=\varepsilon^{-1}h^{(0)}(x)+ h^{(1)}(x)+\ldots
  \end{equation}
of this equation. We first recall a coordinate-free version of standard (Tikhonov-Fenichel) singular perturbation reduction from \cite{gw2}, Theorem 1. (The theorem was stated for systems with rational right-hand side, but as noted in \cite{gw2}, Remark 2, sufficient differentiability already guarantees existence.) The following conditions are relevant.
\begin{itemize}
\item[(TF0)]  There exists a point $x_0$ in the zero set $\mathcal V(h^{(0)})$ such that $\rank Dh^{(0)}(x)=r<m$ for all $x\in \mathbb R^m$ in some neighborhood of $x_0$.
\end{itemize}
\begin{remark}
By the implicit function theorem, (TF0) implies the existence of a neighborhood $U$ of $x_0$ such that $V:=U\cap \mathcal V(h^{(0)})$ is a $(m-r)$-dimensional submanifold. 
\end{remark}
\begin{itemize}
\item[(TFI)] There is  a direct sum decomposition
  \[
   \mathbb R^m=\ker Dh^{(0)}(x) \oplus \im Dh^{(0)}(x)
  \]
for all $x\in V$.
(In other words, one requires that algebraic and geometric multiplicity of the eigenvalue zero of $Dh^{(0)}(x)$ are equal.)
\end{itemize}

For details and proofs concerning the next two results we refer to \cite{gw2}.
\begin{proposition}\label{decompred} Let {\em (TF0)} and {\em (TFI)} be given. Then the following hold.
\begin{enumerate}[(a)]
\item (Product decomposition) On some neighborhood $\widetilde U\subseteq U$ of $x_0$ there exist $C^1$ maps  \[P\colon \widetilde U\to \mathbb R^{m\times r}\quad \text{and} \quad \mu\colon\widetilde U\to \mathbb R^r\]  with $\rank P(x_0)=\rank D\mu(x_0)=r$, such that
    \[
     h^{(0)}(x)=P(x)\mu(x),\quad x\in \widetilde U.
    \]
 Moreover, the zero set $Y$ of $\mu$ satisfies $Y=V\cap \widetilde U= \mathcal V(h^{(0)})\cap\widetilde U$. The entries of $\mu$ may be taken as any $r$ entries of $h^{(0)}$ that are functionally independent at $x_0$.
\item The system
    \begin{equation}\label{Grenzsystem}
     x^\prime=q(x):=Q(x)\cdot h^{(1)}(x)
    \end{equation}
  with \[Q(x):=Id-P(x)(D\mu(x)P(x))^{-1}D\mu(x),\] 
is defined in $\widetilde U$, and the manifold $Y$ is an invariant set of \eqref{Grenzsystem}. Moreover, every entry of $\mu$ is a first integral of \eqref{Grenzsystem}.
\end{enumerate}
\end{proposition}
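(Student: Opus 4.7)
For \emph{part (a)}, I would begin by using (TF0) to select $r$ entries of $h^{(0)}$ that are functionally independent at $x_0$ and collect them into a map $\mu$ with $\rank D\mu(x_0) = r$. By the rank theorem there are local coordinates $(y_1, y_2) \in \mathbb R^r \times \mathbb R^{m-r}$ near $x_0$ in which $\mu$ is the projection $(y_1, y_2) \mapsto y_1$. In these coordinates each scalar component $h^{(0)}_i$ is a $C^2$ function of $(y_1, y_2)$ that vanishes on $\{y_1 = 0\}$, so a componentwise application of Hadamard's lemma yields $C^1$ functions $P_{ij}$ with $h^{(0)}_i = \sum_j P_{ij} y_{1,j}$; assembling these produces $h^{(0)}(x) = P(x) \mu(x)$. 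The equality $Y = V \cap \widetilde U$ then falls out of this factorization, since $\mathcal V(h^{(0)}) \subseteq \mu^{-1}(0)$ is automatic and the reverse inclusion is $h^{(0)} = P \mu$. Differentiating $h^{(0)} = P \mu$ at $x_0$ and using $\mu(x_0) = 0$ gives $Dh^{(0)}(x_0) = P(x_0) D\mu(x_0)$, which combined with $\rank Dh^{(0)}(x_0) = r$ and the surjectivity of $D\mu(x_0)$ forces $\rank P(x_0) = r$.

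For \emph{part (b)}, the crux is to verify that $D\mu(x) P(x) \in \mathbb R^{r \times r}$ is invertible on a neighborhood of $x_0$; once this is known, $Q$ and $q$ are automatically $C^1$ there. This is precisely the step where (TFI) enters. From $Dh^{(0)}(x_0) = P(x_0) D\mu(x_0)$ and the surjectivity of $D\mu(x_0)$ one concludes $\im Dh^{(0)}(x_0) = \im P(x_0)$, and by dimension counting $\ker Dh^{(0)}(x_0) = \ker D\mu(x_0)$. Hypothesis (TFI) therefore reads $\mathbb R^m = \ker D\mu(x_0) \oplus \im P(x_0)$. If $D\mu(x_0) P(x_0) v = 0$, then $P(x_0) v$ lies in both summands and hence vanishes, and injectivity of $P(x_0)$ yields $v = 0$; thus $D\mu(x_0) P(x_0)$ is invertible, and by continuity the same holds on a neighborhood $\widetilde U$.

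The remaining assertions are then a direct calculation: one checks $D\mu(x) Q(x) = D\mu(x) - D\mu(x) P(x) (D\mu(x) P(x))^{-1} D\mu(x) = 0$, so for each component $\mu_j$ and each $x \in \widetilde U$ the Lie derivative $L_q(\mu_j)(x) = D\mu_j(x) \cdot Q(x) h^{(1)}(x)$ vanishes. This simultaneously exhibits every $\mu_j$ as a first integral of $x^\prime = q(x)$ and shows that $q$ is tangent to $Y = \mu^{-1}(0)$ along $Y$, proving its invariance.

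The only real obstacle is the invertibility step in part (b): hypothesis (TFI) cannot be dispensed with there, since otherwise $\ker D\mu(x_0) \cap \im P(x_0)$ could be nontrivial and $D\mu(x_0) P(x_0)$ need not even be injective. Everything else reduces to the rank theorem, a componentwise application of Hadamard's lemma, or a mechanical differentiation.
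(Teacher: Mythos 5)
The paper does not actually prove Proposition \ref{decompred}; it refers to \cite{gw2} for the details, so there is no in-text argument to compare against. Your part (b) is correct and is the standard argument: identifying $\im Dh^{(0)}(x_0)=\im P(x_0)$ and $\ker Dh^{(0)}(x_0)=\ker D\mu(x_0)$ from the factorization of the Jacobian, reading (TFI) as $\mathbb R^m=\ker D\mu(x_0)\oplus\im P(x_0)$ to get invertibility of $D\mu(x_0)P(x_0)$, and then the computation $D\mu\cdot Q=0$, which gives both the first integrals and the invariance of $Y$.

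Part (a), however, contains a circularity at its one genuinely nontrivial step. To apply Hadamard's lemma componentwise you assert that \emph{every} component $h^{(0)}_i$ vanishes on $\{y_1=0\}=\mu^{-1}(0)$; but for the $m-r$ components not chosen as entries of $\mu$ this is exactly the inclusion $\mu^{-1}(0)\subseteq\mathcal V(h^{(0)})$ near $x_0$, which you then claim ``falls out of the factorization'' afterwards. The inclusion is not automatic: for $h^{(0)}(x_1,x_2)=(x_1,x_2^2)$ with $\mu=x_1$ one has $\rank Dh^{(0)}(0)=1$ and $\mu^{-1}(0)\not\subseteq\mathcal V(h^{(0)})$. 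What saves the day is the part of (TF0) you never invoke, namely that $\rank Dh^{(0)}(x)=r$ for all $x$ in a \emph{neighborhood} of $x_0$ (the example above violates this). That constant-rank hypothesis makes $V=U\cap\mathcal V(h^{(0)})$ an $(m-r)$-dimensional submanifold (as noted in the Remark following (TF0)); since $V\subseteq\mu^{-1}(0)$ trivially and $\mu^{-1}(0)$ is also an $(m-r)$-dimensional submanifold through $x_0$, invariance of domain shows the two coincide on a neighborhood of $x_0$. Only after this identity is established may Hadamard's lemma be applied in your rank-theorem coordinates; the rest of your part (a), including $\rank P(x_0)=r$ via $Dh^{(0)}(x_0)=P(x_0)D\mu(x_0)$, then goes through.
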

We will call \eqref{Grenzsystem} the Tikhonov-Fenichel reduction of \eqref{evolutionscaled}.  The result holds for every connected component of $Y$, hence we may and will assume that $Y$ is connected.

\begin{remark} \label{remarkred}
\begin{enumerate}[(a)]
\item Conditions (TF0) and (TFI) ensure the existence of a coordinate transformation that puts \eqref{evolutionscaled} into {Tikhonov} standard form, and the reduced system \eqref{Grenzsystem} corresponds to the familiar reduction with slow and fast variables; see  \cite{nw11}.
\item As was shown in \cite{gw2}, for rational $h^{(0)}$ one may choose $P$ and $\mu$ rational, and the decomposition can be obtained constructively by methods of algorithmic algebra.
\end{enumerate}
\end{remark}

The next condition guarantees local convergence of solutions to solutions of the reduced system.
\begin{itemize}
\item[(TFII)]  All nonzero eigenvalues of $Dh^{(0)}(x)$, $x\in Y$,  have negative real part.
\end{itemize}
With these assumptions one can state a coordinate-free local version of (Tikhonov's and) Fenichel's reduction theorem; see \cite{gw2}, Theorem 1.

\begin{proposition}\label{zitatgoeke} 
Assume that {\em (TF0)}, {\em (TFI)}  and {\em (TFII)} hold. Then there exists $T>0$ and a neighborhood $U^*\subset U$ of $Y$ such that solutions of \eqref{evolutionscaled} starting in $U^*$ converge uniformly on $[\tau_0,T]$ to solutions of the reduced system \eqref{Grenzsystem} on $Y$ as $\varepsilon\to 0$, for any $\tau_0$ with $0<\tau_0<T$.
\end{proposition}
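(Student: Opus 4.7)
The plan is to reduce the coordinate-free statement to the standard Tikhonov--Fenichel theorem by constructing a local change of coordinates that puts \eqref{evolutionscaled} into Tikhonov standard form near $x_0$, and then invoking the classical convergence theorem on compact time intervals (as, e.g., in Fenichel \cite{fenichel} or in the form needed for Proposition \ref{hopstandard}). Since the conclusion concerns only the compact slow-time interval $[\tau_0,T]$, no unbounded-time arguments enter; we need only local existence of the transformation and a verification that the transformed system has the right slow/fast split and the correct asymptotic stability of the fast equation.

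First, Proposition \ref{decompred} supplies the decomposition $h^{(0)}(x)=P(x)\mu(x)$ on a neighborhood $\widetilde U$ of $x_0$, with $Y=\mathcal V(h^{(0)})\cap \widetilde U$ an $(m-r)$-dimensional submanifold and $\rank D\mu(x_0)=r$. Using (TFI), one can choose a complementary $C^1$ map $\pi\colon \widetilde U\to \mathbb R^{m-r}$ whose differential at $x_0$ annihilates $\im P(x_0)$, so that the map $\Psi(x)=(\pi(x),\mu(x))$ is a local $C^1$-diffeomorphism near $x_0$. Setting $y_1=\pi(x)$, $y_2=\mu(x)$, and differentiating along \eqref{evolutionscaled} yields
\[
y_1'=D\pi(x)\,\bigl(\varepsilon^{-1}P(x)\mu(x)+h^{(1)}(x)+\varepsilon h^{*}(x,\varepsilon)\bigr),\qquad
y_2'=\varepsilon^{-1}D\mu(x)\,h(x,\varepsilon).
\]
Because $D\pi\cdot P$ vanishes at $x_0$, and because $\mu(x)=y_2$, the $\varepsilon^{-1}$ term in the $y_1$ equation is of the form $y_2\cdot(\text{smooth})$, and after a further shearing $\widetilde y_1=y_1-\varepsilon\cdot(\text{correction})$ (a standard near-identity modification, or alternatively a refined choice of $\pi$ using the projector $Q$) one obtains the Tikhonov standard form
$\widetilde y_1'=f(\widetilde y_1,y_2,\varepsilon)$, $y_2'=\varepsilon^{-1}g(\widetilde y_1,y_2,\varepsilon)$
with $g(\widetilde y_1,0,0)=0$.

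Next I would verify the stability hypothesis. By construction $g(\widetilde y_1,y_2,0)=D\mu(x)\,h^{(0)}(x)=D\mu(x)P(x)\mu(x)$, so
\[
D_2g(\widetilde y_1,0,0)=D\mu(x)\,P(x)\big|_{Y},
\]
which is invertible by Proposition \ref{decompred}(b). Moreover, the non-zero eigenvalues of $Dh^{(0)}(x)=P(x)D\mu(x)+\text{(terms vanishing on $Y$)}$ coincide with the eigenvalues of $D\mu(x)P(x)$ for $x\in Y$, so by (TFII) all eigenvalues of $D_2g(\widetilde y_1,0,0)$ have strictly negative real part. This is exactly the hypothesis of Tikhonov's theorem for the transformed system, so solutions starting in a small neighborhood $U^*$ of $Y$ converge, uniformly on any $[\tau_0,T]$ with $\tau_0>0$, to solutions of the reduced slow equation $\widetilde y_1'=f(\widetilde y_1,y_2^{*}(\widetilde y_1),0)$ with $y_2^{*}(\widetilde y_1)\equiv 0$.

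Finally I would identify the slow reduction in the new coordinates with the coordinate-free equation \eqref{Grenzsystem}. Writing $\Psi_*q$ for the push-forward of $q=Q\,h^{(1)}$, a direct computation using $Q=Id-P(D\mu P)^{-1}D\mu$ shows that $D\pi\cdot Qh^{(1)}$ equals the Tikhonov slow vector field and $D\mu\cdot Qh^{(1)}\equiv 0$, so the reduced dynamics in $(y_1,y_2)$ coordinates indeed corresponds to the flow of $q$ on $Y$; this is precisely the assertion of Proposition \ref{decompred}(b) that the entries of $\mu$ are first integrals of \eqref{Grenzsystem}. I expect the main technical obstacle to be the clean construction of the coordinate change---in particular showing that one can actually eliminate the $\varepsilon^{-1}$ term from the slow equation globally on $\widetilde U$ rather than only at $x_0$; this is handled by choosing $\pi$ so that $D\pi$ equals the projection onto $\ker D\mu$ along $\im P$ (which is well-defined by (TFI) on all of $\widetilde U$ after shrinking), which simultaneously makes the identification with the projector $Q$ transparent.
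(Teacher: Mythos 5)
The paper itself does not prove Proposition \ref{zitatgoeke}: it is quoted from \cite{gw2}, Theorem 1 (see also \cite{nw11}, Proposition 2.3), and the proof given there follows exactly your strategy --- construct a local transformation to Tikhonov standard form and invoke the classical compact-interval convergence theorem. Your verification of the eigenvalue hypothesis (the nonzero spectrum of $Dh^{(0)}=P\,D\mu$ on $Y$ equals the spectrum of the invertible matrix $D\mu\,P$) and your identification of the reduced flow via $D\mu\, Q\, h^{(1)}\equiv 0$ are both correct.

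The gap is in the construction of the slow coordinate $\pi$, which you rightly flag as the main obstacle but then dispose of incorrectly, in two ways. First, a near-identity shearing $\widetilde y_1=y_1-\varepsilon\cdot(\text{correction})$ alters the slow equation only at order $O(1)$; it cannot cancel the offending term $\varepsilon^{-1}D\pi(x)P(x)\mu(x)$, which is of order $\varepsilon^{-1}$. Second, one cannot in general ``choose $\pi$ so that $D\pi$ equals the projection onto $\ker D\mu$ along $\im P$'': prescribing the full derivative of a map is overdetermined (its rows must be exact one-forms), and even the weaker requirement $\ker D\pi(x)=\im P(x)$ for all $x$ amounts to integrating the $r$-dimensional distribution $x\mapsto\im P(x)$, which needs involutivity (Frobenius) and is not implied by (TF0)--(TFII). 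What is actually needed is only $D\pi(x)\,h^{(0)}(x)\equiv 0$, i.e.\ that the $s$ components of $\pi$ are independent first integrals of the fast system $\dot x=h^{(0)}(x)$. Their existence is precisely the nontrivial input: it is supplied by \cite{gw2}, Proposition 2 (cf.\ Remark \ref{zitatremark}~(b)) resp.\ \cite{nw11}, Proposition 2.2 in the analytic case, and by Fenichel \cite{fenichel}, Lemma 5.3 (using (TFII)) in the $C^2$ case --- the same facts this paper invokes in step (ii) of the proof of Theorem \ref{hshsatz}. With $\pi$ chosen this way one also obtains $D\pi\,P=0$ on $Y$ for free (differentiate $D\pi\,h^{(0)}\equiv0$ and use $h^{(0)}|_Y=0$), which is what makes your final identification $D\pi\,Qh^{(1)}=D\pi\,h^{(1)}$ on $Y$ legitimate.
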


\begin{remark}\label{zitatremark}
 \begin{enumerate}[(a)]
  \item The submanifold $V$ is called the {\em asymptotic slow manifold} (or {\em critical manifold}). 
  \item Concerning the question of finding the appropriate initial values on $Y$ (which was in principle also settled by Fenichel \cite{fenichel}, Theorem 9.1), we briefly summarize the discussion in \cite{gw2} Proposition 2: The system $\dot x=h^{(0)}(x)$ admits $m-r$ independent first integrals in a neighborhood of $x_0$, and the intersection of a common level set of the first integrals with $Y$ consists (locally) of a single point. To project an initial value of system \eqref{evolution} to an initial value of \eqref{Grenzsystem} on $Y$, choose the corresponding intersection point. Thus, a solution of \eqref{evolutionscaled} starting at $x_0\in U^*$ converges to the solution of \eqref{Grenzsystem} starting at the projected initial value.
  \item In the situation of Proposition \ref{zitatgoeke} we sometimes call \eqref{Grenzsystem} a \textit{convergent} Tikhonov-Fenichel reduction of \eqref{evolutionscaled}; in contrast to a {\em formal} reduction whenever only (TF0) and (TFI) hold.
  \item The proof of Proposition \ref{zitatgoeke} (see \cite{nw11} Proposition 2.3 and \cite{gw2} Theorem 1) shows that the coordinate transformation which puts \eqref{evolutionscaled} into {Tikhonov} standard form (see Remark \ref{remarkred} (a)) also maps solutions of the reduced system \eqref{Grenzsystem} to solutions of the corresponding reduced system \eqref{redSystem1}--\eqref{redSystem2} in {Tikhonov} standard form.
 \end{enumerate}
\end{remark}

Up to this point we focussed on conditions which ensure convergence of singular perturbation reduction on some compact subinterval of $(0, \infty)$. We now introduce additional conditions to guarantee validity of the reduction on unbounded intervals. The first of these conditions could be weakened, but it is convenient for applications and it is satisfied for many relevant systems, in particular reaction systems.

\begin{itemize}
\item [(CIS)] There exists a compact neighborhood $K\subseteq \widetilde U$ of $x_0$ which is positively invariant for all differential equations \eqref{evolution} with $0<\varepsilon<\varepsilon_0$.
\end{itemize}

By continuous dependence one obtains:

\begin{lemma} 
 Under the assumptions of Proposition \ref{zitatgoeke} and given condition (CIS), the set $K\cap Y$ is  positively invariant for the reduced system \eqref{Grenzsystem}.
\end{lemma}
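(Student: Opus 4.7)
The plan is to combine condition (CIS), which gives positive invariance of $K$ for every perturbed equation, with the convergence statement of Proposition \ref{zitatgoeke} on a fixed-length interval $[0,T]$, and then to exploit that $K$ is closed, so that any pointwise limit of points in $K$ lies in $K$. The argument naturally splits into a short-time step of length $T$ followed by an iteration to cover all of $[0,\infty)$.

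For the short-time step, let $y_0\in K\cap Y$, let $\widetilde\psi(\tau)$ denote the solution of the reduced system \eqref{Grenzsystem} with $\widetilde\psi(0)=y_0$, and for $\varepsilon\in(0,\varepsilon_0)$ let $\Phi_\varepsilon(\tau)$ be the solution of \eqref{evolutionscaled} with $\Phi_\varepsilon(0)=y_0$. Since \eqref{evolutionscaled} is only the positive time-rescaling $\tau=\varepsilon t$ of \eqref{evolution}, condition (CIS) forces $\Phi_\varepsilon(\tau)\in K$ for all $\tau\ge 0$; in particular $\Phi_\varepsilon$ exists on $[0,\infty)$. Because $U^*$ is a neighborhood of $Y$ we have $y_0\in U^*$, so Proposition \ref{zitatgoeke} applies. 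By Remark \ref{zitatremark}(b), the projection of $y_0\in Y$ along common level sets of the first integrals of $\dot x=h^{(0)}(x)$ returns $y_0$ itself, so the limiting reduced solution is precisely $\widetilde\psi$, and $\Phi_\varepsilon(\tau)\to\widetilde\psi(\tau)$ uniformly on $[\tau_0,T]$ for every $\tau_0\in(0,T]$. Closedness of $K$ then forces $\widetilde\psi(\tau)\in K$ on $(0,T]$, and continuity together with $\widetilde\psi(0)=y_0\in K$ extends this to $[0,T]$; combined with the invariance $\widetilde\psi(\tau)\in Y$ from Proposition \ref{decompred}(b), one obtains $\widetilde\psi(\tau)\in K\cap Y$ on $[0,T]$.

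To reach all $\tau\ge 0$ I would iterate: the endpoint $\widetilde\psi(T)\in K\cap Y$ serves as a new initial value, yielding $\widetilde\psi(\tau)\in K\cap Y$ on $[T,2T]$, and so on. Because the length $T$ in Proposition \ref{zitatgoeke} is a fixed positive constant that works for initial data anywhere in $U^*$, the iteration exhausts $[0,\infty)$; global existence of $\widetilde\psi$ is automatic since the solution remains in the compact set $K\subset\widetilde U$ on which $q$ is $C^1$.

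The step I expect to be the main obstacle is the identification of the limiting reduced solution at $\tau=0$, that is, verifying that a point $y_0\in Y$ projects to itself so that $\Phi_\varepsilon$ converges to the specific trajectory $\widetilde\psi$ and not to some other reduced trajectory. This relies on the description of the projection via the common level set of $m-r$ independent first integrals of $\dot x=h^{(0)}(x)$, which meets the $(m-r)$-dimensional manifold $Y$ transversally and hence, locally, only at $y_0$. Once this identification is in place, the remainder is a clean combination of the closedness of $K$, continuity of the reduced flow, and the uniform length $T$ from Proposition \ref{zitatgoeke}.
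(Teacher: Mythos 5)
Your argument is correct and follows exactly the route the paper intends: the paper gives no written proof beyond the phrase ``by continuous dependence,'' and your combination of (CIS), the convergence on $[\tau_0,T]$ from Proposition \ref{zitatgoeke}, closedness of $K$, invariance of $Y$ from Proposition \ref{decompred}(b), and iteration over intervals of the fixed length $T$ is precisely the elaboration of that remark. The point you flag as the main obstacle --- that $y_0\in Y$ projects to itself, so the limit is the trajectory through $y_0$ --- is handled correctly via Remark \ref{zitatremark}(b).
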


Next come the crucial conditions.

\begin{itemize}
\item [(GP)] There exists a contractible open subset $W$ of $\mathbb R^{s}$, $s=m-r$, and a global injective $C^2$ immersion $\Lambda^*\colon W\to Y$.
\end{itemize}

\begin{remark}
\begin{enumerate}[(a)]
\item We introduce condition (GP) to make the reasoning more transparent, and in order to state the following Lemma in a more general context. But below we will introduce a further condition (LC) which actually implies (GP). Indeed, as an argument in the proof of Theorem \ref{hshsatz} will show, by global asymptotic stability there exists a flow on $W$ which contracts to a point (see also Corollary \ref{graphcor}).
\item We (may and) will assume that $S_{1,R}\subseteq W$ for some $R>0$.
\item In the statement of (GP) one may replace $Y$ by a relatively open neighborhood of $Y\cap K$ in $Y$.
\end{enumerate}
\end{remark}

\begin{lemma}
Under the assumptions of Proposition \ref{zitatgoeke} and given conditions (CIS) and (GP) there is a compact set $K^*\supseteq K\cap Y$ with nonempty interior, some $0<\rho\leq R$ and a $C^2$-diffeomorphism
\[
\Lambda\colon S_\rho\to K^*, \quad \text{with  } \Lambda|_{S_{1,\rho}\times \{0\}}= \Lambda^*|_{S_{1,\rho}\times \{0\}}.
\]
(Thus, there are open neighborhoods of $S_\rho$ resp. $K^*$ that are mapped to each other by $\Lambda$ and its inverse.)
\end{lemma}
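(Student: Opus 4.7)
The plan is to construct $\Lambda$ as a linear tubular-neighborhood extension of $\Lambda^*$, using the factor $P$ from Proposition~\ref{decompred} as a global frame for the transverse subbundle $\im Dh^{(0)}$ along $Y$. Condition (TFI) makes the differential of this extension invertible on the slow slice $W\times\{0\}$; compactness of $K\cap Y$ then upgrades the local inverse-function-theorem diffeomorphism to one on a uniform transverse tube; and a linear rescaling of the transverse variable brings the resulting product domain into the $\ell^1$-diamond $S_\rho$.

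\medskip
\noindent\textbf{Step 1 (the tubular map and invertibility of its differential).} I would define
\[
 \Psi(y_1,y_2) := \Lambda^*(y_1) + P(\Lambda^*(y_1))\,y_2
\]
on a neighborhood of $W\times\{0\}$. First, from $h^{(0)}=P\mu$ with $\mu$ vanishing on $Y$ one obtains $Dh^{(0)}(\Lambda^*(y_1)) = P(\Lambda^*(y_1))\,D\mu(\Lambda^*(y_1))$, so $\im Dh^{(0)}(\Lambda^*(y_1)) = \im P(\Lambda^*(y_1))$ by a rank count. Second, because $\Lambda^*$ is a $C^2$ immersion between equidimensional manifolds, $\im D\Lambda^*(y_1) = T_{\Lambda^*(y_1)}Y = \ker Dh^{(0)}(\Lambda^*(y_1))$. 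Hence by (TFI) the block matrix $D\Psi(y_1,0) = [\,D\Lambda^*(y_1)\mid P(\Lambda^*(y_1))\,]$ is invertible, and the inverse function theorem makes $\Psi$ a local $C^2$ diffeomorphism at every point of $W\times\{0\}$.

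\medskip
\noindent\textbf{Step 2 (uniformization and rescaling to $S_\rho$).} Since $\Lambda^*\colon W\to\Lambda^*(W)$ is an injective immersion of equidimensional manifolds, it is a diffeomorphism onto its open image; using the remark that the target of $\Lambda^*$ may be replaced by a relatively open neighborhood of $Y\cap K$, the preimage $N := (\Lambda^*)^{-1}(K\cap Y)$ is a compact subset of $W$. After an inner coordinate change on $W$ if necessary, pick $\rho\le R$ with $N \subset \operatorname{int} S_{1,\rho}$. A standard compactness-plus-continuity argument on $S_{1,\rho}$ then yields $\delta>0$ such that $\Psi$ is injective and a $C^2$ diffeomorphism from $S_{1,\rho}\times S_{2,\delta}$ onto its image. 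Finally set
\[
 \Lambda(y_1,y_2) := \Psi\bigl(y_1,(\delta/\rho)\,y_2\bigr)\quad\text{on } S_\rho,\qquad K^* := \Lambda(S_\rho).
\]
For $(y_1,y_2)\in S_\rho$ one has $|y_1|_1\le\rho$ and $(\delta/\rho)|y_2|_1 \le \delta$, so the argument of $\Psi$ lies in $S_{1,\rho}\times S_{2,\delta}$, making $\Lambda$ a $C^2$-diffeomorphism. The identity $\Lambda|_{S_{1,\rho}\times\{0\}} = \Lambda^*|_{S_{1,\rho}}$ is then immediate, and $K^*\supseteq\Lambda^*(S_{1,\rho})\supseteq K\cap Y$ has nonempty interior.

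\medskip
\noindent\textbf{Main obstacle.} The essential content sits in Step 2: producing one and the same $\rho$ that is large enough for $S_{1,\rho}$ to cover $(\Lambda^*)^{-1}(K\cap Y)$ yet small enough to remain inside $W$, together with a uniform transverse width $\delta$ on which $\Psi$ is still a diffeomorphism. These constraints pull in opposite directions and must be reconciled through compactness of $K\cap Y$ and uniform continuity of $D\Psi$. A secondary technical worry is the exact regularity of $P\circ\Lambda^*$: should the Hadamard-type decomposition in Proposition~\ref{decompred} only yield $P\in C^1$, I would instead appeal to contractibility of $W$ to pick a genuine $C^2$ global frame of the rank-$r$ transverse subbundle and use that in place of the columns of $P(\Lambda^*(\cdot))$.
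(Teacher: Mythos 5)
Your construction is correct in substance but takes a genuinely different route from the paper. The paper disposes of the lemma in two lines of differential topology: contractibility of $W$ forces the normal bundle of $Y$ to be trivial (Hirsch \cite{hirsch}, Ch.~4, Corollary 2.5), and the tubular neighborhood theorem (Hirsch \cite{hirsch}, Ch.~4, Theorem 5.1), applied together with the injectivity of $\Lambda^*$, then delivers the product diffeomorphism directly. You instead build the tubular neighborhood by hand, exploiting structure specific to this problem: the factor $P$ from Proposition \ref{decompred} serves as a global frame for $\im Dh^{(0)}$ along $Y$ (your rank count $\im Dh^{(0)}=\im P$ and $\ker Dh^{(0)}=\ker D\mu=T_xY$ on $Y$ is correct), and (TFI) is exactly the transversality $\mathbb R^m=T_xY\oplus\im Dh^{(0)}(x)$ that makes $D\Psi(y_1,0)$ invertible; compactness, the injectivity of $\Lambda^*$, and a rescaling do the rest. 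What your version buys is an explicit, self-contained map whose transverse fibers are aligned with $\im Dh^{(0)}$; what the paper's citation buys is brevity and insulation from the regularity of $P$.

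Two points deserve care. First, Proposition \ref{decompred} only guarantees $P\in C^1$, so $\Psi$ as written is merely $C^1$, while the lemma asks for a $C^2$ diffeomorphism; your proposed repair should be phrased as an approximation rather than a framing, because the subbundle $\im Dh^{(0)}|_Y$ is itself only $C^1$ and need not admit a $C^2$ frame. Since invertibility of $\bigl[\,D\Lambda^*(y_1)\mid \widetilde P(y_1)\,\bigr]$ is an open condition and the construction only uses transversality to $TY$ (not that the fibers equal $\im Dh^{(0)}$), any smooth $\widetilde P$ uniformly close to $P\circ\Lambda^*$ on a neighborhood of the compact set $N$ does the job. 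Second, the ``inner coordinate change'' placing $N=(\Lambda^*)^{-1}(K\cap Y)$ inside $\operatorname{int}S_{1,\rho}$ is asserted rather than proved and is not entirely innocent for a general contractible $W$; the paper implicitly normalizes the parameterization in the same way (see the remark following (GP)), so this is a shared convention rather than a defect of your argument, but it should be stated as a normalization up front rather than smuggled in mid-proof.
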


\begin{proof}
The normal bundle $N$ of $Y$ is trivial, since $W$ is simply connected; see Hirsch \cite{hirsch}, Ch.~4, Corollary 2.5.  Now the assertion follows by injectivity of $\Lambda^*$ and Hirsch \cite{hirsch}, Ch.~4, Theorem 5.1.
\end{proof}

The final condition we require is as follows.

\begin{itemize}
\item [(LC)] The reduced system  \eqref{Grenzsystem} admits one and only one stationary point $z$ in $Y\cap K$, and the conditions in Lemma \ref{lyapunov} are satisfied. 
\end{itemize}

With these assumptions  the convergence statement of Proposition \ref{hopstandard} carries over.

\begin{theorem}\label{hshsatz}
For system \eqref{evolutionscaled} assume that {\em(TF0)}--{\em(TFII)} as well as {\em(CIS)}, {\em(GP)} and {\em(LC)} are satisfied.
 Then there exist a compact $\widetilde K\subseteq \Lambda(S_\rho)$ with nonempty interior,  $z\in \widetilde K$, and $\varepsilon_0^*>0$ with the following properties: Given  $y_0\in \widetilde K$ and $0<\varepsilon<\varepsilon_0^*$, the solution $\Phi(t,y_0)$ of \eqref{tnfhs1}--\eqref{tnfhs2}
exists for $0\leq t<\infty$. As $\varepsilon\to 0$, this solution converges uniformly on all closed subsets of $(0,\infty)$ to the solution 
of \eqref{Grenzsystem} with initial value according to Remark \ref{zitatremark} b).

\end{theorem}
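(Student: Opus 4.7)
The plan is to reduce the theorem to Proposition \ref{hopstandard} by transporting the system to Tikhonov standard form via the diffeomorphism $\Lambda$ constructed in the preceding lemma, then verifying conditions (AS0) through (ASIII) in the new coordinates. By Remark \ref{remarkred}(a) and Remark \ref{zitatremark}(d), conditions (TF0) and (TFI) already guarantee the existence of a local $C^2$ change of coordinates near any point of $Y$ putting \eqref{evolutionscaled} into the slow--fast form \eqref{tnfhs1}--\eqref{tnfhs2}; the new feature supplied by (GP) and the preceding lemma is a \emph{global} such chart on a neighborhood of $Y\cap K$. I would first shrink $K$ to a compact $\widetilde K\subseteq \Lambda(S_\rho)$ containing $z$ in its interior, and then work throughout on the pullback under $\Lambda$, writing the transformed right-hand side as $\varepsilon^{-1}g(y_1,y_2,\varepsilon)$ in the second coordinate block and as $f(y_1,y_2,\varepsilon)$ in the first.

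Next I would verify (AS0)--(ASIII) one by one. (AS0) follows from the $C^2$ regularity of $h$ and $\Lambda$. (ASI) is immediate: $\Lambda$ sends $S_{1,\rho}\times\{0\}$ into $Y$ and (LC) provides the stationary point $z$; after a translation in the $y_1$ variable we may assume $z$ corresponds to the origin in $\mathbb R^{s+r}$. The key eigenvalue condition (ASII) is where (TFII) enters: on $Y$ the linearization $Dh^{(0)}(x)$ has $m-r$ eigenvalues equal to zero (coming from the tangent space of $Y$, which is in $\ker Dh^{(0)}$ by (TFI)) and $r$ eigenvalues with strictly negative real part; in the standard-form coordinates this block decomposition means $D_2g(y_1,0,0)$ carries precisely the non-zero eigenvalues. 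Compactness of $S_{1,R}$ combined with the strict-inequality version of (ASII) mentioned after the conditions then yields the required uniform $\nu>0$. Finally, (ASIII) is the content of (LC) transported through $\Lambda$: since $\Lambda$ maps the reduced flow on $Y$ to the reduced flow \eqref{redSystem1}--\eqref{redSystem2} (Remark \ref{zitatremark}(d)), the Lyapunov function provided by (LC) pulls back to a Lyapunov function on $Y\cap\widehat K$ with the same order $a$ and exponent $k$, so Lemma \ref{lyapunov} applies.

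With (AS0)--(ASIII) in hand, Proposition \ref{hopstandard} yields a compact neighborhood in the $y$-coordinates, some $\varepsilon_0^*>0$, and the desired uniform convergence on closed subsets of $(0,\infty)$ of the standard-form solutions to the solution of the reduced standard-form system. Pushing forward by $\Lambda$ and invoking (CIS) to ensure that solutions of \eqref{evolution} starting in $\widetilde K$ stay in $K$ (hence in the domain of the chart) for all positive times, I would conclude global existence and convergence to the solution of \eqref{Grenzsystem} with the projected initial value described in Remark \ref{zitatremark}(b). The identification of initial values is consistent because $\Lambda$ is compatible with the slow-fast splitting, so the projection along fibres of $\mathcal V(h^{(0)})$ in $x$-coordinates corresponds to projection onto $y_2=0$ in $y$-coordinates.

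The main obstacle I anticipate is the verification of (ASII), specifically the passage from the spectral statement ``the non-zero eigenvalues of $Dh^{(0)}(x)$ have negative real part for $x\in Y$'' to the matrix-level statement about $D_2g(y_1,0,0)$, uniformly in $y_1\in S_{1,R}$. This requires keeping track of how the direct sum decomposition of (TFI) transforms under $\Lambda$ and exploiting the compactness of $\widetilde K$ to get the uniform bound $\nu$; the compactness argument noted after (ASIII) in the excerpt is essential here. The rest of the argument is essentially bookkeeping to ensure all constructions take place on a single compact set where everything is uniformly controlled.
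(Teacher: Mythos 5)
Your overall strategy coincides with the paper's: conjugate the system by the diffeomorphism $\Lambda$ from the preceding lemma, verify (AS0)--(ASIII) for the transformed system $\dot x=\widetilde h(x,\varepsilon):=D\Lambda(x)^{-1}h(\Lambda(x),\varepsilon)$, and invoke Proposition \ref{hopstandard}. Your verifications of (ASI), of (ASII) via conjugacy of Jacobians together with (TFII) and compactness, and of (ASIII) by pulling the Lyapunov function from (LC) back through $\Lambda$ (using Remark \ref{zitatremark}(d), and the bi-Lipschitz bounds for $\Lambda^{-1}$ on the compact set $S_\rho$ to preserve condition (ii) of Lemma \ref{lyapunov}) all match the paper's steps.

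There is, however, one genuine gap. You take for granted that in the $\Lambda$-coordinates the right-hand side splits as $(f,\varepsilon^{-1}g)$, i.e.\ that the transformed system is already in Tikhonov standard form. The tubular-neighborhood diffeomorphism $\Lambda$ only guarantees $\widetilde h^{(0)}(x_1,0)=0$; it does \emph{not} guarantee that the first block of $\widetilde h^{(0)}(x_1,x_2)$ vanishes for $x_2\neq 0$, because the fibres $\Lambda(\{x_1\}\times S_{2,\rho})$ need not coincide with the stable fibres of the fast flow $\dot x=h^{(0)}(x)$. Without that vanishing, equation \eqref{tnfhs1} acquires an $O(\varepsilon^{-1})$ term and Proposition \ref{hopstandard} is not applicable. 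The paper closes this gap by a further fibre-straightening normalization: using (TFII) one invokes Fenichel \cite{fenichel}, Lemma 5.3, to arrange that the layer system reads $\dot x_1=0$, $\dot x_2=g^{(0)}(x_1,x_2)$ with $g^{(0)}(x_1,0)=0$ (alternatively, for analytic systems, one takes $s$ independent first integrals of $\dot x=h^{(0)}(x)$ as the slow coordinates, cf.\ \cite{nw11}). Your closing remark that ``$\Lambda$ is compatible with the slow-fast splitting'' is precisely the assertion that requires this argument; as written it is assumed rather than proved, and it is the most delicate point of the whole proof since everything downstream presupposes the standard form.
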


\begin{proof}
Use $\Lambda\colon S_\rho\to K^*$ to define
\[
\widetilde h(x,\varepsilon):=D\Lambda(x)^{-1}h(\Lambda(x,\varepsilon)).
\]
By construction, the diffeomorphism $\Lambda$ sends solutions of $\dot x =\widetilde h(x,\varepsilon)$ to solutions of $\dot x=h(x,\varepsilon)$; and it is sufficient to verify conditions (AS0) through (ASIII) for the former system.\\
\begin{enumerate}[(i)]
\item For $\varepsilon=0$ one has the identity
\[
D\Lambda(x)\widetilde h^{(0)}(x)=h^{(0)}(\Lambda(x)),
\]
with $\widetilde h^{(0)}(x):=\widetilde h(x,0)$. Let $x=(x_1,x_2)$, with $x_1\in \mathbb R^s$ and $x_2\in\mathbb R^r$. Since $\Lambda$ extends $\Lambda^*$, we have 
\[
\widetilde h^{(0)}((x_1,0))=D\Lambda((x_1,0))^{-1}h^{(0)}(\Lambda((x_1,0)))=0, 
\]
and we obtain the conjugacy property
\[
D\widetilde h^{(0)}((x_1,0))=D\Lambda((x_1,0))^{-1}Dh^{(0)}(\Lambda((x_1,0)))D\Lambda((x_1,0))
\]
by differentiation, noting that the second term on the right hand side vanishes due to $h^{(0)}(\Lambda((x_1,0)))=0$.
\item Moreover  $\dot x=\widetilde h^{(0)}(x)$ may be assumed to be in the particular form
\[
\begin{array}{rcl}
\dot x_1&=&0\\
\dot x_2&=& g^{(0)}(x_1,x_2)
\end{array}
\]
with $ g^{(0)}(x_1,0)=0$. In other words, $\dot x =\widetilde h(x,\varepsilon)$ is in Tikhonov standard form and (AS0), (ASI) hold.\\
To verify this, note that (TFII) holds and use Fenichel \cite{fenichel}, Lemma 5.3.
(A different proof for analytic systems makes use of the fact that the differential equation  $\dot x =\widetilde h^{(0)}(x)$ admits $s$ independent first integrals in the neighborhood of any stationary point; see \cite{nw11}, Proposition 2.2.)
\item By conjugacy of Jacobians, (TFII) and compactness one sees that (ASII) is satisfied.
\item There remains to verify the existence of a Lyapunov function so that (ASIII) holds. The reduced system corresponding to $\widetilde h$ will be called $x^\prime =\widetilde q(x)$. It has the special form
\[
\begin{array}{rcl}
x_1^\prime&=& f^{(1)}(x_1,0)\\
x_2^\prime &=& 0
\end{array}
\] 
with the slow manifold being given by $x_2=0$. Due to Remark \ref{zitatremark} (d), the map $\Lambda$ sends solutions of $\dot x=\widetilde q(x)$ to solutions of $\dot x=q(x)$. Now the Lyapunov function $\varphi$ for $q$ satisfies $L_q(\varphi)\leq -\nu\cdot\varphi^k$, and with the well-known identity
\[
L_{\widetilde q}(\varphi\circ\Lambda)=L_q(\varphi)\circ\Lambda
\]
one obtains that $\widetilde\varphi:=\varphi\circ\Lambda$ is a Lyapunov function for $\widetilde q$, and that the inequality
\[
L_{\widetilde q}(\widetilde \varphi)\leq -\nu \widetilde \varphi^k
\]
holds.  Moreover, obviously $\widetilde\varphi\geq 0$ with $0$ the only zero. Finally, since $\Lambda^{-1}$ is a diffeomorphism and its derivative is bounded on the compact set $S_\rho$, the mean value estimate shows the existence of positive constants $k_1$ and $k_2$ such that
\[
   k_1\norm{x-z}\leq \norm{\Lambda^{-1}(x)-\Lambda^{-1}(z)}\leq k_2\norm{x-z}
\]
for all $x\in S_\rho$. This implies condition (ii) from Lemma \ref{lyapunov}.
\end{enumerate}

\end{proof}

\begin{corollary}\label{graphcor} If $Y$ is the graph of some smooth function  $\Gamma\colon W\to \mathbb R^r$, with $W\subseteq R^{m-r}$ contractible and open, and {\em(CIS)} and {\em (LC)} are satisfied in addition to {\em(TF0)}--{\em(TFII)}, then {\em (GP)} and thus the conclusion of Theorem \ref{hshsatz} hold.
\end{corollary}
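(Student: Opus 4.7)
The plan is to verify condition (GP) directly from the graph hypothesis; once this is done, the conclusion follows by immediate application of Theorem \ref{hshsatz}, since (CIS), (LC) and (TF0)--(TFII) are already part of the hypotheses of the corollary. So the entire task reduces to exhibiting a global injective $C^2$ immersion of a contractible open subset of $\mathbb{R}^s$ onto $Y$.

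The natural candidate is the graph parameterization itself. I would define $\Lambda^{*}\colon W \to Y$ by
\[
\Lambda^{*}(w) := (w,\Gamma(w)).
\]
Smoothness of $\Lambda^{*}$ in class $C^2$ is inherited directly from $\Gamma$. Injectivity is immediate: the projection $\pi_1\colon \mathbb{R}^{m-r}\times \mathbb{R}^r \to \mathbb{R}^{m-r}$ onto the first $m-r$ coordinates satisfies $\pi_1 \circ \Lambda^{*} = \mathrm{id}_W$. For the immersion property, at every $w \in W$ one has
\[
D\Lambda^{*}(w) = \begin{pmatrix} I_{m-r} \\ D\Gamma(w) \end{pmatrix},
\]
which has rank $m-r = s$ because of the identity block. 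By hypothesis $W$ is open and contractible in $\mathbb{R}^{m-r} = \mathbb{R}^{s}$, so condition (GP) is verified.

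With (GP) established and all other assumptions of Theorem \ref{hshsatz} already in force, the theorem applies verbatim and yields the stated convergence. I do not expect any substantive obstacle here; the corollary merely records the fact that a graph presentation of the critical manifold automatically furnishes the global parameterization required by (GP), making Theorem \ref{hshsatz} immediately applicable.
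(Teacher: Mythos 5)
Your proof is correct and matches the paper's (implicit) reasoning: the paper states this corollary without proof precisely because the graph parameterization $w\mapsto(w,\Gamma(w))$ is the obvious global injective $C^2$ immersion furnishing (GP), after which Theorem \ref{hshsatz} applies directly. Your verification of injectivity via the projection and of the immersion property via the identity block is exactly the intended argument.
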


\begin{remark}\label{lyappar} 
In the setting of Theorem \ref{hshsatz}, it suffices to require the existence of a Lyapunov function $\varphi$ for $q$ on $Y\cap K$ (rather than in some neighborhood of $Y$), since $\widetilde \varphi$ can be extended to $S_\rho$ by setting $\widehat \varphi(x_1,\,x_2):=\widetilde\varphi(x_1)$.
\end{remark}

\subsection{One-dimensional slow manifolds}
For a general system \eqref{evolutionscaled} the verification of condition (LC) on $Y$ a priori requires explicit knowledge of a Lyapunov function. However, for one-dimensional slow manifolds a simple condition will imply (LC). We first note a property of differential equations on real intervals which is essentially common knowledge (since differential equations in $\mathbb R$ are gradient systems); a proof is included for the reader's convenience.
\begin{lemma}\label{1dlyapunov}
 Let $U\subseteq \mathbb R$ be an open interval containing $0$, and $p\in C^1(U)$ with $p(0)=0$. Moreover let $K\subset U$ be compact with $0\in K$, and $0$ the only stationary point of $x'=p(x)$ in $K$. If $0$ is linearly asymptotically stable (i.e. $p'(0)<0$) then 
  \[
   \widetilde\varphi(x)=-\int_{0}^xp(y)\: dy
  \]
is a Lyapunov function of $x'=p(x)$ which satisfies the hypotheses of Lemma \ref{lyapunov} on $K$, with $a=2$ and $k=1$.
\end{lemma}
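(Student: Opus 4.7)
The plan is to verify the three hypotheses (i), (ii), (iii) of Lemma \ref{lyapunov} for $\widetilde\varphi$, taking $Y=U$, $\widehat K=K$, $z=0$, $a=2$, $k=1$. The Lie derivative is immediate: $L_p(\widetilde\varphi)(x)=\widetilde\varphi\,'(x)\,p(x)=-p(x)^2$, so conditions (i) and (iii) amount to sign/size comparisons between $\widetilde\varphi(x)$ and $p(x)^2$ on the compact set $K$.

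For (i), the key observation is that the connected components of $K\setminus\{0\}$ are intervals on which $p$ vanishes nowhere (since $0$ is the only equilibrium in $K$), so by continuity $p$ has constant sign on each of them. Since $p'(0)<0$, this forced sign is negative to the right of $0$ and positive to the left, and consequently $\widetilde\varphi(x)=-\int_0^x p(y)\,dy>0$ for every $x\in K\setminus\{0\}$, while $\widetilde\varphi(0)=0$. For the local quadratic estimate (ii), I would write $p(y)=p'(0)y+y\cdot r(y)$ with $r\in C(U)$ and $r(0)=0$, integrate to get
\[
\widetilde\varphi(x)=-\tfrac12 p'(0)\,x^2 + x^2\cdot\epsilon(x),
\]
with $\epsilon(x)\to 0$ as $x\to 0$, and then pick $\rho>0$ small enough so that $|\epsilon(x)|\le -p'(0)/4$ on $(-\rho,\rho)$; this yields $c_1=-p'(0)/4$ and $c_2=-3p'(0)/4$.

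For the decay inequality (iii), I would argue by a continuous-extension/compactness trick. Define
\[
\Psi(x):=\frac{p(x)^2}{\widetilde\varphi(x)}\quad\text{for }x\in K\setminus\{0\}.
\]
By the expansions $p(x)^2=(p'(0))^2 x^2+o(x^2)$ and $\widetilde\varphi(x)=-\tfrac12 p'(0)\,x^2+o(x^2)$, one checks $\Psi(x)\to -2p'(0)>0$ as $x\to 0$, so $\Psi$ extends continuously to all of $K$ with $\Psi(0)=-2p'(0)>0$. Being continuous and strictly positive on the compact set $K$ (strictly positive on $K\setminus\{0\}$ because $p$ does not vanish there, and at $0$ by the limit), $\Psi$ attains a positive minimum $\nu>0$; this gives $-p(x)^2\le -\nu\,\widetilde\varphi(x)$ for all $x\in K$, which is (iii) with $k=1$.

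The only step requiring care is the continuous extension of $\Psi$: it is essential that the same factor $-\tfrac12 p'(0)$ appears in the leading term of $\widetilde\varphi$ as half of $(p'(0))^2/p'(0)$ inside $p(x)^2/\widetilde\varphi(x)$, so that the limit is a well-defined positive number and not $0$ or $\infty$. Everything else follows from the standard Taylor remainder for $C^1$ functions and the compactness of $K$.
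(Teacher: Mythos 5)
Your proof is correct and follows essentially the same route as the paper's: a first-order Taylor/Hadamard factorization $p(y)=y\cdot\widehat p(y)$ near $0$ yields the two-sided quadratic bounds on $\widetilde\varphi$ and the estimate $-p^2\leq-\nu\,\widetilde\varphi$ locally, and compactness of $K$ away from the origin handles the rest (your continuous extension of $\Psi=p^2/\widetilde\varphi$ is the same device the paper applies to $\varphi/\norm{x-z}^a$). The only point worth adding explicitly is the one-line observation, made in the paper's proof, that the sign pattern of $p$ you establish (negative to the right of $0$, positive to the left) also gives positive invariance of $K$, which is among the standing hypotheses of Lemma \ref{lyapunov}.
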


\begin{proof} By Hadamard's lemma
\[
p(x)= x\cdot \widehat p(x),\text{  with  }\widehat p \text {  continuous and   }\widehat p(0)=-\theta<0;
\]
moreover $\widehat p$ is negative throughout $K$.
This implies that $K$ is positively invariant and that $\widetilde \varphi$ is nonnegative, with $0$ its only zero. In some neighborhood $\widetilde U$ of $0$ one has the estimates
\[
-2\theta\cdot x\leq p(x)\leq -\theta/2\cdot x \text{  for }x>0,\quad -\theta/2\cdot x\leq p(x)\leq -2\theta\cdot x \text{  for  }x<0,
\]
which imply
\[
\theta/2\cdot x^2\leq \widetilde\varphi(x)\leq 2\theta\cdot x^2
\]
for all $x\in \widetilde U$. Therefore condition (ii) from Lemma \ref{lyapunov} holds on $K$, since $\widetilde\varphi$ and $L_p(\widetilde\varphi)$ are continuous and the complement of $\widetilde U$ in $K$ is compact. By construction 
$L_{p}(\widetilde\varphi)(x)=-p^2(x)$, and the above estimate shows
\[
L_{p}(\widetilde\varphi)(x)=-x^2 \cdot 4\theta^2
\]
in $\widetilde U$, whence conditions (ii) and (iii) in Lemma \ref{lyapunov} hold in $\widetilde U$ with $a=2$ and $k=1$, and (by compactness and continuity arguments) on all of $K$.

\end{proof}

Now we can state our result.

\begin{proposition}\label{1dprop} 
For system \eqref{evolutionscaled} assume that {\em(TF0)}--{\em(TFII)} and {\em(CIS)} are satisfied. Moreover assume that $Y\cap K$ is one-dimensional, connected, contains exactly one stationary point $z$, and that the linearization of the reduced equation $\dot x=q(x)$ at $z$ admits a negative eigenvalue. 
Then the conclusion of Theorem \ref{hshsatz} holds.
\end{proposition}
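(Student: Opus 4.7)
The plan is to reduce Proposition \ref{1dprop} to Theorem \ref{hshsatz} by verifying its two outstanding hypotheses, (LC) and (GP); the remaining conditions (TF0)--(TFII) and (CIS) are already assumed.

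\emph{Verification of (LC).} Because $Y$ is invariant under the reduced equation $x^\prime=q(x)$ (Proposition \ref{decompred}(b)) and $T_zY$ is one-dimensional, the linearization $Dq(z)$ leaves $T_zY$ invariant and the assumed negative eigenvalue is necessarily the scalar $Dq(z)|_{T_zY}$. Hence the restriction of $q$ to the one-dimensional manifold $Y$, viewed as a scalar ODE on an interval via any local chart around $z$, has a linearly asymptotically stable equilibrium at $z$. Lemma \ref{1dlyapunov} then supplies a Lyapunov function $\widetilde\varphi$ on $Y\cap K$ with $a=2$, $k=1$ satisfying the hypotheses of Lemma \ref{lyapunov}. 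By Remark \ref{lyappar} such a Lyapunov function defined only on $Y\cap K$ suffices, so (LC) holds.

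\emph{Verification of (GP).} I need a contractible open $W\subseteq\mathbb{R}$ and an injective $C^2$ immersion $\Lambda^*\colon W\to Y$ whose image contains $Y\cap K$ (using the version of (GP) from the Remark after it, where $Y$ may be replaced by a relatively open neighborhood of $Y\cap K$ in $Y$). The set $Y\cap K$ is compact, connected, and one-dimensional, so by the standard classification of compact connected $1$-manifolds it is homeomorphic either to a closed arc $[0,1]$ or to the circle $S^1$. The circle case can be ruled out: $q|_Y$ would then be a smooth vector field on $S^1$ whose only zero in $Y\cap K$ is the hyperbolic sink $z$ (of index $+1$), violating the Poincar\'e--Hopf identity $\sum \mathrm{index}=\chi(S^1)=0$. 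Therefore $Y\cap K$ is an arc, and slightly thickening it inside the $1$-manifold $Y$ yields a connected open neighborhood diffeomorphic to an open interval $W\subseteq\mathbb{R}$. This diffeomorphism is the required $\Lambda^*$, and (GP) follows.

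\emph{Conclusion and obstacle.} With (LC) and (GP) in place, Theorem \ref{hshsatz} applies and delivers the desired uniform convergence on unbounded time intervals. The main obstacle is the verification of (GP): translating ``hyperbolic sink with unique equilibrium in $Y\cap K$'' into a topological obstruction that excludes the circle case, and then upgrading the compact arc $Y\cap K$ to an open contractible parameterization inside $Y$. Once this is done, the Lyapunov step is essentially a direct quotation of Lemma \ref{1dlyapunov}, and nothing else needs to be checked.
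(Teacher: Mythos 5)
Your overall route is the same as the paper's: verify (GP) by classifying the compact connected one\mbox{-}dimensional set $Y\cap K$ as an arc or a circle and excluding the circle, verify (LC) via Lemma \ref{1dlyapunov} together with Remark \ref{lyappar}, and then invoke Theorem \ref{hshsatz}. (The paper excludes the circle by the dynamical observation that a circle cannot carry a flow with a single asymptotically stable stationary point, rather than by Poincar\'e--Hopf; both work, although the index of a hyperbolic sink on a one\mbox{-}manifold is $\operatorname{sign}\det Dq(z)|_{T_zY}=-1$, not $+1$ --- the contradiction with $\chi(S^1)=0$ survives.)

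There is, however, a genuine gap in your verification of (LC). You assert that because $Dq(z)$ leaves the one\mbox{-}dimensional space $T_zY$ invariant, ``the assumed negative eigenvalue is necessarily the scalar $Dq(z)|_{T_zY}$.'' Invariance alone does not give this: a priori $Dq(z)$ could act as $0$ on $T_zY$ while its negative eigenvalue has an eigenvector transverse to $Y$ (think of $\diag(0,-1)$ with $T_zY=\spa\{e_1\}$), in which case $z$ would not be linearly asymptotically stable for $q|_Y$ and Lemma \ref{1dlyapunov} would not apply; note that your Poincar\'e--Hopf step also presupposes that $z$ is a nondegenerate zero of $q|_Y$, so the same issue infects the circle exclusion. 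What rules this scenario out is the structure of the reduced system: by Proposition \ref{decompred} the $r=m-1$ entries of $\mu$ are functionally independent first integrals of $x'=q(x)$, so differentiating $D\mu_i(x)q(x)=0$ at the stationary point $z$ yields $D\mu_i(z)Dq(z)=0$ for all $i$, whence $\rank Dq(z)\le 1$ and $\im Dq(z)\subseteq\bigcap_i\ker D\mu_i(z)=T_zY$. Consequently the eigenvalue $0$ has geometric multiplicity at least $m-1$, and the eigenspace of the unique nonzero (negative) eigenvalue equals $\im Dq(z)=T_zY$; this is exactly the argument the paper uses. With that observation inserted, your proof is complete and coincides with the paper's.
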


\begin{proof} We first note that the one-dimensional compact and connected manifold is homeomorphic to a compact interval or to a circle; see e.g. Milnor \cite{milnor}. But the latter is incompatible with the existence of a single stationary point that is asymptotically stable. The curve $Y$ admits a global parameterization by curve length, and therefore (GP) is satisfied. According to Proposition \ref{decompred}, there are $m-1$ functionally independent defining equations for $Y$ near any of its points, and these are first integrals for the reduced equation (see also Proposition \ref{zitatgoeke}). Therefore the linearization at $z$ admits the eigenvalue $0$ with geometric multiplicity $\geq m-1$ at $z$, and the eigenspace of the nonzero (negative) eigenvalue must be equal to the tangent space to $Y$ at $z$. Now Lemma \ref{1dlyapunov} and Theorem \ref{hshsatz}  apply.
\end{proof}

\section{Examples}

In this section we will discuss some reaction equations, with an emphasis on one-dimensional slow manifolds. Let $\mathbb R_+^n$ be the set of all vectors $x\in\mathbb R^n$ with nonnegative entries. Moreover, the concentration of a chemical species $Z$ will be denoted with a lowercase letter $z$.

\subsection{Michaelis-Menten reaction and variants}

 The well-known (reversible) Michaelis-Menten reaction is defined by the reaction scheme
   \[
    E+S \xrightleftharpoons[k_{-1}]{k_{1}} C \xrightleftharpoons[k_{-2}]{k_{2}} E+P;
  \]  
see Michaelis and Menten\cite{michaelismenten}, and also Briggs and Haldane \cite{briggshaldane}. The concentrations of each chemical species will be denoted by the corresponding lower-case letter. By mass-action kinetics, using the linear first integrals $e+c$ and $s+c+p$ from stoichiometry and assuming that initially no complex $C$ or product $P$ are present, one obtains the following two-dimensional problem:
  \begin{align*}
   &\dot s=-k_1e_0s+(k_1s+k_{-1})c\\
   &\dot c=k_1e_0s-(k_1s+k_{-1}+k_2)c+k_{-2}(e_0-c)(s_0-s-c).
  \end{align*}
\subsubsection{Small enzyme concentration}
 The standard approach takes the assumption of small initial enzyme concentration $e_0=\varepsilon e_0^*$. By the results in \cite{nw11} and \cite{gswz}, there exists a convergent Tikhonov-Fenichel reduction to
  \[
   s^\prime=-e_0^*\frac{(k_1k_2+k_{-1}k_{-2})s-k_{-1}k_{-2}s_0}{k_1s+k_{-1}+k_2+k_{-2}(s_0-s)},
  \]
 the slow manifold being $V=\{(s,0)\in {\mathbb R}^2_+\}$.  Using an explicit transformation to Tikhonov standard form and employing some straightforward but elaborate computations, convergence on unbounded time intervals was already proven in \cite{nw11}. (As mentioned before, Segel and Slemrod \cite{SSl} gave a proof by direct estimates for the irreversible case $k_{-2}=0$.) We use this example only to illustrate how Proposition \ref{1dprop} greatly simplifies convergence proofs.
Indeed, the validity of  (TF0)--(TFII) and (CIS) is easy to verify, and clearly, the reduced equation admits exactly one stationary point
  \[
   s^*=\frac{k_{-1}k_{-2}s_0}{k_1k_2+k_{-1}k_{-2}}
  \]
 which is linearly asymptotically stable. Thus, Proposition \ref{1dprop} shows convergence (with the irreversible case included for $k_{-2}=0$.)

\subsubsection{Slow product formation for the irreversible system}
 There are other choices for a ``small parameter''  that yield convergent Tikhonov-Fenichel reductions of the Michaelis-Menten reaction (see \cite{gwz} for an exhaustive discussion). All of these admit one-dimensional slow manifolds and it is easy to check the validity of the hypotheses of Proposition \ref{1dprop}. For instance, assuming slow, irreversible product formation (i.e., $k_2=\varepsilon k_2^*$ small and $k_{-2}=0$), one finds the reduction
\[
 s^\prime = \frac{-(k_1s+k_{-1})k_1k_2^*e_0s}{k_1k_{-1}e_0+(k_1s+k_{-1})^2},
\]
on $V=\{(s,c)\in{\mathbb R}^2_+,\ k_1e_0s=(k_1s+k_{-1})c\}$. Again, Proposition \ref{1dprop} is applicable to show convergence on $[\tau_0,\,\infty)$ for any $\tau_0>0$.  

\subsubsection{Competitive inhibition}\label{compinone}
As an extension of the Michaelis-Menten model we discuss an irreversible enzyme reaction with inhibition (see e.g. Keener and Sneyd \cite{KeenerSneyd}). The reaction scheme is given by
  \begin{align*}
   &E+S \xrightleftharpoons[k_{-1}]{k_{1}} C_1 \xrightarrow{k_{2}} E+P\\
   &E+I \xrightleftharpoons[k_{-3}]{k_{3}} C_2. 
  \end{align*}
We assume the usual initial values $c_1(0)=c_2(0)=0$, $s(0)=s_0>0$, $e(0)=e_0>0$ and $i(0)=i_0>0$. Using the linear first integrals 
  \begin{align*}
   &\psi_1(e,s,c_1,c_2,p,i)=e+c_1+c_2,\quad \psi_2(e,s,c_1,c_2,p,i)=s+c_1+p,\\ &\psi_3(e,s,c_1,c_2,p,i)=i+c_2
  \end{align*} 
from stoichiometry, one obtains a three-dimensional system. Again we assume that the initial enzyme concentration is low, thus $e_0=\varepsilon e_0^*$, and obtain the differential equation 
\[
  \begin{array}{cclcl}
   \dot s&=&k_{-1}c_1+k_1s(c_1+c_2)&-&\varepsilon e_0^* k_1s \\
  \dot c_1&=&-k_1s(c_1+c_2)-(k_{-1}+k_2)c_1 &+& \varepsilon e_0^* k_1s\\
  \dot c_2&=&-k_3(c_1+c_2)(i_0-c_2)-k_{-3}c_2&+&\varepsilon e_0^*k_3(i_0-c_2)
  \end{array}
\]
The reduction was computed in \cite{gswz}, subsection 3.2; in particular (TF0)-(TFII) (GP) and (CIS) hold for the slow manifold
$Y$ defined by $c_1=c_2=0$, and the reduced equation 
\[
s^\prime=-e_0^*\, \frac{k_1k_2k_{-3}s}{k_{-3}(k_1s+k_{-1}) +(k_{-1}+k_2)k_3i_0 +k_2k_{-3}}
\]
admits the only stationary point $0$, which is linearly asymptotically stable. By  Proposition \ref{1dprop} we obtain convergence on every interval $[\tau_0,\,\infty)$, with $\tau_0>0$. (The same holds true for reversible product formation.) \\ For this system the method employed in \cite{nw11} is not feasible, since an explicit transformation to Tikhonov standard form (in particular the requisite first integrals) seems to be unavailable.

\subsection{Maltose transport} 
In order to further illustrate the range of applicability of Proposition \ref{1dprop}, we discuss an example which is less straightforward from a computational perspective. Thus, we continue the discussion in \cite{gw2}, Section 4, of a reaction equation proposed by Stiefenhofer \cite{sti} for maltose transport. According to the model, in order to pass through the cell membrane, a maltose molecule $X$ first reacts with a binding protein $Z$ to a complex $Y_1$. The latter reacts with the membrane-bound receptor $R$, forming a complex $Y_2$, which subsequently degrades, releasing maltose into the cell. This last process is modelled by a reaction involving the maltose concentration $X_i$ in the interior of the cell. Moreover, Stiefenhofer assumes a direct reaction between the binding protein and the membrane receptors, modelled by a further reaction. Altogether, the transport mechanism is modelled
by the network
  \begin{align*}
   &Y_2  \xrightarrow{k_{1}} R+Z+X_i,\quad Z+X\xrightleftharpoons[k_{-2}]{k_{2}} Y_1\\
   &Y_1+R\xrightleftharpoons[k_{-3}]{k_{3}} Y_2,\quad Z+R\xrightleftharpoons[k_{-4}]{k_{4}} Y_3.
  \end{align*}
In order to reduce notational and computational complexity, we follow Stiefenhofer by setting all rate constants equal to to 1, except for $k_1=\varepsilon$. Moreover we define $v:=(x,z,r,\xi,y_1,y_2,y_3)$, $\bar v:=(\xi,y_1,y_2,y_3)$ and assume $y_1(0)=y_2(0)=y_3(0)=0$. Now we can use the stoichiometric first integrals 
  \begin{align*}
   &\psi_1(v)=z+y_1+y_2+y_3,\quad \psi_2(v)=r+y_2+y_3,\quad \psi_3(v)=x+\xi+y_1+y_2
  \end{align*}
to write the reaction rates in the form 
  \begin{align*}
   \hat E_1(v)&=-y_2=:E_1(\bar v),\\
  \hat E_2(v)&=y_1-zx\\ &=y_1-(z_0-(y_1+y_2+y_3))(x_0+\xi_0-(\xi+y_1+y_2))=:E_2(\bar v)\\
 \hat  E_3(v)&=y_2-y_1r\\ &=y_2-y_1(r_0-(y_2+y_3))=:E_3(\bar v)\\
  \hat E_4(v)&=y_3-zr\\ &=y_3-(z_0-(y_1+y_2+y_3))(r_0-(y_2+y_3))=:E_4(\bar v).
  \end{align*}
Thus the reaction is described by the following system
  \begin{align*}
   &\dot \xi=-\varepsilon E_1(\bar v)\\
   &\dot y_1=-E_2(\bar v)+E_3(\bar v)\\
   &\dot y_2=\varepsilon E_1(\bar v)-E_3(\bar v)\\
   &\dot y_3=-E_4(\bar  v).
  \end{align*}
 As proven in \cite{gw2}, there exists a (formal) reduction to
  \begin{align*}
   &\xi^\prime=y_2\\
   &y_1^\prime=\frac{y_2(y_1+y_2+y_3-z_0)}{n(\bar v)}\\
   &y_2^\prime=-y_2-\frac{y_2(\xi-\xi_0+2(y_1+y_2)+y_3-(x_0+z_0+1))}{n(\bar v)}\\
   &y_3^\prime=\frac{y_2((y_2+y_3)(y_1+y_2+y_3-r_0-z_0)+r_0(z_0-y_1))}{n(\bar v)}
  \end{align*}
 where
  \[
   n(\bar v)=\xi_0-\xi+(y_1+y_2+y_3-z_0)(y_2+y_3-r_0-1)-(y_1+y_2)+1+x_0.
  \]
 The system is given on $K:=L\cap Y$, where
  \[
   L:=\{\bar v\in{\mathbb R}^4_+,\ y_1+y_2+y_3\leq z_0,\ y_2+y_3\leq r_0,\ \xi+y_1+y_2\leq \xi_0+x_0\}
  \]
 is the chemically relevant region (determined by stoichiometry) and the curve
  \[
   Y:=\{\bar v\in{\mathbb R}^4_+, E_2(\bar v)=E_3(\bar v)=E_4(\bar v)=0\}
  \]
 is the slow manifold.\\
We first complete the discussion in  \cite{gw2} by showing that all nonzero eigenvalues of the Jacobian have negative real parts; in particular we have convergence of the reduction. To this end, note that with $\varepsilon=0$ the Jacobian of the reaction equation can be written as
\[
\begin{pmatrix} 0&0&0&0\\
                  *&-1-a-b-c& -a-b+1+d&-a+d\\
                *& c& -1-d&-d\\
                 *& -c&-b-c& -1-b-c\\
\end{pmatrix}
\]
with
\[
\begin{array}{rcl}
a&:=&x_0+\xi_0-(\xi+y_1+y_2)\\
b&:=&z_0-(y_1+y_2+y_3)\\
c&:=&r_0-(y_2+y_3)\\
d&:=& y_1
\end{array}
\]
all of which are nonnegative in view of the first integrals $\psi_1$, $\psi_2$ and $\psi_3$ and nonnegativity of concentrations. The characteristic polynomial 
\[
x^3+A_1x^2+A_2x+A_3
\]
of the lower right $3\times 3$ minor has all roots with negative real parts if (and only if) $A_1>0$, $H_2:=A_1A_2-A_3>0$ and $A_3>0$; see the Hurwitz-Routh criterion (Gantmacher \cite{gantmacher}, Ch.~V, \S6). A straightforward computation (using the {\sc Maple} software package) shows
\[
\begin{array}{rcl}
A_1&=&3+2b+2c+d+a\\
H_2&=&{a}^{2}b+{a}^{2}c+{a}^{2}d+3\,a{b}^{2}+7\,abc+4\,abd+3\,a{c}^{2}+4\,acd\\
                 &+&a{d}^{2}+2\,{b}^{3}+7\,{b}^{2}c
        +3\,{b}^{2}d+7\,b{c}^{2}+6\,bcd\\
&+&b{d}^{2}+2\,{c}^{3}+3\,{c}^{2}d
+c{d}^{2}+2\,{a}^{2}+10\,ba+9\,ca+6\,da+10\,{b}^{2}\\
&+&21\,cb+10\,db+9\,{c}^{2}+10\,cd+2\,{d}^{2}+8\,a+16\,b+14\,c+8\,d+8\\
 A_3&=&{b}^{2}c+b{c}^{2}+bcd+ba+ca+da+{b}^{2}+2\,cb+db+a+2\,b+c+d+1
\end{array}
\]
and the nonnegativity of $a,\ldots, d$ implies positivity of $A_1$, $H_2$ and $A_3$. Thus condition (TFII) holds.\\

Now we address global convergence. No explicit parameterization of $Y$ seems to be known. Nonetheless in \cite{gw2} the existence of exactly one stationary point in $K$ was shown, and also its linear asymptotic stability on $Y$ and global asymptotic stability on $K$. By Theorem \ref{1dprop} we obtain the desired convergence result.

\subsection{A two-dimensional slow manifold}
Some variants of Michaelis-Menten may lead to two-dimensional slow manifolds, depending on the parameters.  We consider again competitive inhibition with irreversible product formation (see subsection \ref{compinone}), but now with small parameters $k_1=\varepsilon k_1^*$, $k_{-1}=\varepsilon k_{-1}^*$ and $k_2=\varepsilon k_2^*$. The differential equation is
  \begin{align*}
   &\dot s=\varepsilon\left[k_{-1}^*c_1-k_1^*s(e_0-c_1-c_2)\right]\\
   &\dot c_1=\varepsilon\left[k_1^*s(e_0-c_1-c_2)-(k_{-1}^*+k_2^*)c_1\right]\\
   &\dot c_2=k_3(e_0-c_1-c_2)(i_0-c_2)-k_{-3}c_2
  \end{align*}
 on the (chemically relevant) positively invariant compact set \[L:=\{(s,c_1,c_2)\in {\mathbb R}^3_+,\ c_1+c_2\leq e_0,\ s+c_1\leq s_0,\ c_2\leq i_0\}.\] A short computation (see \cite{gwz}, p.~1175 f.) shows that (TFII) is satisfied, and that there exists a convergent Tikhonov-Fenichel reduction to
  \begin{align}
   &s'=k_{-1}^*c_1-k^*_1s(e_0-c_1-c_2)\label{inhib1}\\
   &c_1'=k_1^*s(e_0-c_1-c_2)-(k_{-1}^*+k_2^*)c_1\label{inhib2}\\
   &c_2'=\frac{-(i_0-c_2)[k_1^*s(e_0-c_1-c_2)-(k_{-1}^*+k_2^*)c_2]}{\kappa+e_0+i_0-c_1-2c_2}
  \end{align}
 in $L\cap Y$ with the two-dimensional asymptotic slow manifold \[Y:=\{(s,c_1,c_2)\in  {\mathbb R}^3_+,\ (e_0-c_1-c_2)(i_0-c_2)-\kappa c_2=0\}\]
 and $\kappa:=\frac{k_{-3}}{k_3}$. (Note that $\kappa+e_0+i_0-c_1-2c_2>0$ on $L\cap Y$.)\\ Thus, with
  \begin{equation}\label{inhibman}
   c_2=\vartheta(c_1):=\frac{\kappa+e_0+i_0-c_1-\sqrt{(\kappa+e_0+i_0-c_1)^2-4i_0(e_0-c_1)}}{2},
  \end{equation}
 $L\cap Y$ is contained in the graph of a function of $s$ and $c_1$, and it suffices to analyze \eqref{inhib1}--\eqref{inhib2} in the positively invariant compact set
  \[
   \widetilde L:=\{(s,c_1)\in  {\mathbb R}^2_+,\ c_1+\vartheta(c_1)\leq e_0,\ s+c_1\leq s_0,\, \vartheta(c_1)\leq i_0\}.
  \]
We will abbreviate this system as
\[
\begin{pmatrix} s\\ c_1\end{pmatrix}^\prime =q\left(\begin{pmatrix} s\\ c_1\end{pmatrix}\right).
\]
 It is easy to see that $(0,0)$ is the only stationary point in $\widetilde L$. We construct a suitable Lyapunov function. Let $\alpha>0$ and note that
\[
\begin{array}{rcccl}
\varphi_1&:=&s+c_1&\mbox{  satisfies  }& L_q(\varphi_1)= -k_2^*c_1,\\
\varphi_2&:=&\alpha s&\mbox{  satisfies  }& L_q(\varphi_2)= \alpha(k_{-1}^*+k_1^*s)c_1 -\alpha k_1^*s(e_0-\vartheta(c_1)).\\
\end{array}
\]
On $L\cap Y$ we have
$0\leq k_{-1}^*+k_1^*s\leq k_{-1}^*+k_1^*s_0$,
and furthermore $e_0-c_2>0$. The first of these assertions is obvious. To verify the second, note that $e_0-c_1-c_2\geq 0$ by stoichiometry, whence $e_0-c_2=0$ implies $c_1=0$ and, by the defining equation for $Y$,
\[
0=(e_0-c_1-c_2)(i_0-c_2)=\kappa c_2=\kappa e_0>0,
\]
a contradiction.\\
By compactness, there exists $\beta >0$ such that $e_0-c_2\geq \beta$ for all points in $L$. Now choose $\alpha>0$ such that
\[
\alpha\cdot(k_{-1}^*+k_1^*s_0)<k_2^*,
\]
and define $\varphi:=\varphi_1+\varphi_2$. Then, by the previous estimates we have on $\widetilde L$:
\[
\begin{array}{rcl}
   L_q(\varphi)(s,c_1)&\leq& -(k_2^*-\alpha\cdot(k_{-1}^*+k_1^*s_0))c_1 -\alpha k_1^*\beta s\\
                                &\leq & -\nu\cdot ((1+\alpha)s+c_1) =-\nu\cdot \varphi
\end{array}
  \]
for some $\nu>0$. Thus (LC) holds, and by Theorem \ref{hshsatz} convergence to the reduced system holds for all positive times.
\section{Concluding remarks}
We finish with a few observations, and some comments on possible extensions and generalizations.
\begin{itemize}
\item The results of the present paper will be hardly surprising to application-oriented readers (e.g.  with a background in biochemistry) although the underlying mathematical argument (essentially due to Hoppensteadt \cite{Hoppensteadt}) is far from trivial, and not easy to transfer to applications. This gap between intuition (where matters may seem obvious) and rigorous proofs (which may require an extensive technical build-up) can be observed quite frequently. The authors' main goal was to facilitate the applicability of Hoppensteadt's theorem to relevant settings.
\item By its nature Theorem \ref{hshsatz} is local, but the domain of attraction of $Y$ may properly contain $\widetilde K$. There remains, however, the question how fast a solution approaches the slow manifold. \\ In the setting of reaction equations it is known from the work of Horn and Jackson \cite{horja} and Feinberg \cite{feinberg} that global Lyapunov functions often exist; for instance this is the case for deficiency zero and complex balanced systems. Given a system with slow and fast reactions, a Lyapunov function for the fast subsystem could imply a condition akin to Hoppensteadt's condition (VII), with reasoning similar to (and extending) Lemma \ref{lyapunov}. But the Lyapunov functions from the cited papers do not generally satisfy the hypotheses of the Lemma; thus a case-by-case analysis would be in order. 
\item In Lemma \ref{1dlyapunov}, if $K$ is a neighborhood of $0$ then it suffices that $p$ changes sign at $0$, with the lowest (necessarily odd) order nonzero derivative at $0$ being negative. By analogous estimates one obtains conditions (ii) and (iii) of Lemma \ref{lyapunov}, with exponents $a>2$ and $k>1$. In order to transfer this to systems with one-dimensional slow manifold one would have to generalize Proposition \ref{1dprop} by requiring suitable properties of the Poincar\'e-Dulac normal form at $z$. 

\end{itemize}

\section{Appendix: Hoppensteadt's conditions}\label{appsec}
For the reader's convenience we recall here the conditions and the main result from Hoppensteadt's original paper \cite{Hoppensteadt}.
Recall the notation $S_R$ from \eqref{tubeone}.
Hoppensteadt considers a non-autonomous system that is given in Tikhonov standard form
  \begin{align}
   &y_1'= f(\tau,y_1,y_2,\varepsilon)\label{tnfhs1na}\\
   &y_2'=\varepsilon^{-1}g(\tau,y_1,y_2,\varepsilon)\label{tnfhs2na}
  \end{align}
with $f$ and $g$ defined on an open set 
\[[0,\infty)\times D \times [0,\varepsilon_0)\subseteq [0,\infty)\times\mathbb R^s\times \mathbb R^r\times [0,\varepsilon_0)\to \mathbb R^r\] 
which satisfies $S_R\subseteq D$ for some $R>0$, and $f,\,g$ having values in $\mathbb R^s$ and $\mathbb R^r$, respectively.
Assume that the following conditions hold:
  \begin{itemize}
   \item[(I)] The system 
    \begin{align}
     y_1'&=f(\tau,y_1,y_2,0) \label{redSystem1na}\\
     0&=g(\tau,y_1,y_2,0)\label{redSystem2na}
    \end{align}
 admits a solution $Y:\,[0,\infty)\to \mathbb R^{s+r}$,  $\tau\mapsto Y(\tau)$. With a suitable transformation of \eqref{redSystem1na}--\eqref{redSystem2na} one may assume that $\widetilde Y\equiv0$ is a solution of the transformed system. From here on, it will be assumed that \eqref{redSystem1na}--\eqref{redSystem2na} admits the solution $\widetilde Y\equiv 0$.
   \item[(II)] The functions $f$, $g$ and their partial derivatives with respect to $\tau$, $y_1$, $y_2$ respectively satisfy \[f,g,D_1f,D_2f,\partial_{\tau}g,D_1g,D_2g \in C([0,\infty)\times S_R\times [0,\varepsilon_0]).\] 
   \item[(III)] There exists an  isolated and  bounded  $C^2$-solution $Y_2=Y_2(\tau,y_1)$ of the implicit equation
    \[
     g(\tau,y_1,Y_2(\tau,y_1),0)=0
    \]
   for all $\tau\in [0,\infty)$ and $y_1\in S_1$. By a transformation $\widetilde y_1=y_1$ and $\widetilde y_2+Y(\tau,y_1)=y_2$,  one may obtain that $Y_2(\tau,\widetilde y_1)=0$ for all $(\tau,\widetilde y_1)\in [0,\infty)\times S_1$. This will be assumed for the original system \eqref{tnfhs1na}--\eqref{tnfhs2na} in the following.
   \item[(IV)] $f(\cdot,\cdot,0,0)$ is uniformly continuous in $[0,\infty)\times S_{1,R}$, and moreover $f(\cdot,\cdot,0,0)$ and $D_1f(\cdot,\cdot,0,0)$ are bounded in $[0,\infty)\times S_{1,R}$. 
   \item[(V)] $g(\cdot,\cdot,\cdot,0)$ is uniformly continuous in $[0,\infty)\times S_R$, and moreover $g(\cdot,\cdot,\cdot,0)$, $\partial_{\tau}g(\cdot,\cdot,\cdot,0)$, $D_1g(\cdot,\cdot,\cdot,0)$ $D_2g(\cdot,\cdot,\cdot,0)$ are bounded in $[0,\infty)\times S_R$.
   \item[(VI)] The solution $\widetilde Y\equiv0$ of
    \begin{equation}\label{(D)}
     y'=f(\tau,y,0,0)
    \end{equation}
  is uniformly asymptotically stable in the following sense: There exist a continuous, strictly increasing function \[d\colon [0,\infty)\to[0,\infty)\quad \text{with }d(0)=0\] and a continuous, strictly decreasing function $\sigma\colon [0,\infty)\to[0,\infty)$ with $\lim_{s\to\infty}\sigma(s)=0$, such that for every solution $\Phi(\tau,z)$ of \eqref{(D)} with initial value $y_1(0)=z\in S_{1,R}$  and all $\tau\geq0$ one has
    \[
     \abs{\Phi(\tau,z)}_1\leq d(\abs{z}_1)\cdot\sigma(\tau).
    \]
   
   \item[(VII)] For all $(\alpha,\beta)\in[0,\infty)\times S_{1,R}$  the solution $\widetilde Y\equiv0$ of
  \[
   \dot x=g(\alpha,\beta,x,0)
  \]
is uniformly asymptotically stable in the following sense:  There exist a continuous, strictly increasing function \[e\colon [0,\infty)\to[0,\infty)\quad \text{with }e(0)=0\] and a continuous, strictly decreasing function $\rho\colon [0,\infty)\to[0,\infty)$ with $\lim_{s\to\infty}\rho(s)=0$, such that for every solution $\Psi(t,x_{0};\alpha,\beta)$ of the equation with initial value $x(0)=x_{0}\in S_{2,R}$ and parameters $(\alpha,\beta)\in[0,\infty)\times S_{1,R}$ and all $t\geq0$ one has
    \[
     \abs{\Psi(t,x_0;\alpha,\beta)}_1\leq e(\abs{x_0}_1)\cdot\rho(t).
    \]
  \end{itemize}

Given these assumptions, Hoppensteadt's main result \cite{Hoppensteadt} can be stated as follows:
\begin{theorem}\label{HoppensteadtSatz}
There exists a compact neighborhood  $K\subset S_R$ of $0$ and $\varepsilon_0^*\in (0,\varepsilon_0)$ such that the solution $\Phi(t,y_0,\varepsilon)$ of \eqref{tnfhs1na}--\eqref{tnfhs2na} with initial value $y(0)=y_0:=(y_{1,0},y_{2,0})\in K$ at $\tau=0$ exists for all positive times provided that $0<\varepsilon<\varepsilon_0^*$. Moreover $\Phi(t,y_0,\varepsilon)$ converges uniformly on all closed subsets of $(0,\infty)$ towards the solution of \eqref{redSystem1na}--\eqref{redSystem2na} with initial value $y_1(0)=y_{1,0}$, as $\varepsilon\to0$.
\end{theorem}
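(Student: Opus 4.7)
The plan is to reduce the theorem to Proposition~\ref{hopstandard} by pulling back the system along the diffeomorphism $\Lambda\colon S_\rho\to K^*$ produced in the preceding lemma. Concretely, I would define
\[
\widetilde h(x,\varepsilon):=D\Lambda(x)^{-1}\,h(\Lambda(x),\varepsilon),\qquad x\in S_\rho,
\]
so that $\Lambda$ intertwines the flows of $\dot x=\widetilde h(x,\varepsilon)$ and $\dot x=h(x,\varepsilon)$. It then suffices to check that the pulled-back system satisfies (AS0)--(ASIII), because the slow-time formulations of the two systems are likewise conjugate.

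The main technical step is to show that (after possibly a further local $C^2$ change of coordinates inside the second factor $S_{2,\rho}$) the system $\dot x=\widetilde h(x,\varepsilon)$ is in Tikhonov standard form, i.e.\ its unperturbed part reads
\[
\dot x_1=0,\qquad \dot x_2=g^{(0)}(x_1,x_2),\quad g^{(0)}(x_1,0)=0.
\]
Since $\Lambda$ extends $\Lambda^*\colon W\to Y$, any point $(x_1,0)$ lies on $Y$, and therefore $\widetilde h^{(0)}(x_1,0)=D\Lambda(x_1,0)^{-1}h^{(0)}(\Lambda^*(x_1))=0$. Differentiating this identity at $(x_1,0)$ and using that the second summand of the chain rule vanishes (because $h^{(0)}$ vanishes on $\Lambda^*(W)$) yields the conjugacy
\[
D\widetilde h^{(0)}(x_1,0)=D\Lambda(x_1,0)^{-1}\,Dh^{(0)}(\Lambda^*(x_1))\,D\Lambda(x_1,0),
\]
so the Jacobians at slow-manifold points are similar. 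Combined with (TFII), this already verifies (ASII). To obtain the sharper form $\dot x_1=0$ in the principal part, I would invoke Fenichel's straightening lemma (\cite{fenichel}, Lemma~5.3), which under the normal hyperbolicity granted by (TFII) produces the requisite coordinates; alternatively, in the analytic case one can argue via the existence of $s$ independent first integrals of the unperturbed flow near any point of $Y$, cf.~\cite{nw11}, Proposition~2.2. This step, and matching the resulting straightened coordinates compatibly with the diffeomorphism $\Lambda$, is the point I would expect to be the most delicate.

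Granting the standard form, (AS0) and (ASI) follow from the $C^2$ smoothness of $h$ and $\Lambda$ and the fact that $(0,0)$ is the (unique) stationary point given by (LC). To verify (ASIII), I would transport the Lyapunov function $\varphi$ supplied by (LC) on $Y\cap K$: set $\widetilde\varphi:=\varphi\circ\Lambda$, extended trivially in the fast direction as in Remark~\ref{lyappar}. Using $L_{\widetilde q}(\varphi\circ\Lambda)=L_q(\varphi)\circ\Lambda$ (since $\Lambda$ sends solutions of the reduced system $\dot x=\widetilde q(x)$ to solutions of $\dot x=q(x)$ by Remark~\ref{zitatremark}(d)), the differential inequality $L_q(\varphi)\le-\nu\varphi^k$ pulls back verbatim. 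The norm-equivalence condition (ii) of Lemma~\ref{lyapunov} then follows from the mean value theorem applied to $\Lambda^{-1}$ on the compact set $S_\rho$, which yields two positive constants $k_1,k_2$ with $k_1\|x-z\|\le\|\Lambda^{-1}(x)-\Lambda^{-1}(z)\|\le k_2\|x-z\|$.

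Once (AS0)--(ASIII) are established for $\dot x=\widetilde h(x,\varepsilon)$, Proposition~\ref{hopstandard} delivers a compact neighborhood $K_0\times[0,\varepsilon_0^*]$ on which solutions exist for all positive times and converge uniformly on compact subsets of $(0,\infty)$ to the solution of the reduced system in Tikhonov standard form. Pushing $K_0$ forward by $\Lambda$ yields the desired $\widetilde K\subseteq\Lambda(S_\rho)$, and the identification of the initial value comes from Remark~\ref{zitatremark}(b) together with the fact that $\Lambda$ maps reduced-system solutions to reduced-system solutions.
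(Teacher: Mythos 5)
There is a fundamental mismatch here: the statement you were asked to prove is Theorem \ref{HoppensteadtSatz}, i.e.\ Hoppensteadt's original convergence theorem for the \emph{non-autonomous} system \eqref{tnfhs1na}--\eqref{tnfhs2na} under conditions (I)--(VII), which the paper merely quotes from \cite{Hoppensteadt} in its Appendix and does not prove at all. Your argument instead reproduces, essentially verbatim, the paper's proof of Theorem \ref{hshsatz}: pull the system back by $\Lambda$, verify (AS0)--(ASIII), then invoke Proposition \ref{hopstandard}. But Proposition \ref{hopstandard} is itself established in the paper by checking Hoppensteadt's conditions (I)--(VII) and then appealing to Theorem \ref{HoppensteadtSatz}. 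Your proposal therefore derives Hoppensteadt's theorem from a proposition that is logically downstream of Hoppensteadt's theorem; the argument is circular.

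Beyond the circularity, the proposal never engages with the actual content of the statement. Theorem \ref{HoppensteadtSatz} concerns a time-dependent system already given in Tikhonov standard form, and its hypotheses are (I)--(VII) --- in particular the uniform asymptotic stability of the zero solution of the reduced equation (condition (VI)) and of the boundary-layer equation, uniformly in the frozen parameters $(\alpha,\beta)$ (condition (VII)). None of the objects your proof manipulates --- the diffeomorphism $\Lambda$, the manifold $Y$, conditions (TF0)--(TFII), (CIS), (GP), (LC) --- occur in those hypotheses. A genuine proof would have to carry out the infinite-interval boundary-layer and matching analysis that combines (VI) and (VII) to control the solution uniformly for all $\tau>0$; that analysis is the substance of Hoppensteadt's paper and is exactly what this article treats as a black box.
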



\end{document}